\newtheorem{Theorem}{Theorem}
\newtheorem{theorem}[Theorem]{Theorem}
\newtheorem{corollary}{Corollary}
\newtheorem{lemma}{Lemma}
\newtheorem{remark}{Remark}
\newtheorem{definition}{Definition}
\newtheorem{example}{Example}
\newcommand{\be}{\begin{equation} \label}
\newcommand{\ee}{\end{equation}}
\newcommand{\R}{{\mathbb R}}
\newcommand{\eps}{\varepsilon}
\numberwithin{equation}{section}
\begin{document}

\title[Threshold solutions]{Threshold, subthreshold and global unbounded \\
solutions of superlinear heat equations}
\author[Pavol Quittner and Philippe Souplet]{Pavol Quittner$^{(1)}$ and Philippe Souplet$^{(2)}$}

\thanks{$^{(1)}$Department of Applied Mathematics and Statistics, Comenius University
Mlynsk\'a dolina, 84248 Bratislava, Slovakia. Email: quittner@fmph.uniba.sk} 

\thanks{$^{(2)}$Universit\'e Sorbonne Paris Nord, CNRS UMR 7539, LAGA,
93430 Villetaneuse, France. Email: souplet@math.univ-paris13.fr}

\date{}

\begin{abstract}
We consider the semilinear heat equation with a superlinear nonlinearity
and we study the properties of threshold or subthreshold
solutions, lying on or below the boundary between blow-up and global
existence, respectively. 
For the Cauchy-Dirichlet problem, we prove the boundedness and decay to zero of any subthreshold solution.
This implies, in particular, that all global unbounded solutions 
--- if they exist --- are threshold solutions.
For the Cauchy problem, these properties fail in general but we show that they become
true for a suitably modified notion of threshold.
Our results strongly improve known results even in the model case of power nonlinearities,
especially in the Sobolev critical and supercritical cases.

\vskip 0.2cm
{\bf AMS Classification:} 35K58, 35K57, 35B40, 35B44

\vskip 0.1cm
{\bf Keywords:} semilinear heat equation, superlinear nonlinearity,
threshold solution, boundedness, blow-up

\end{abstract}

\maketitle

\section{Introduction}
\label{sec-intro}

We consider nonnegative classical solutions of the problem
\begin{equation} \label{MPf}
\begin{aligned}
u_t-\Delta u &= f(u) &\quad&\hbox{in }\ \Omega\times(0,T), \\
u &=0 &\quad&\hbox{on }\partial\Omega\times(0,T), \\
 u(\cdot,0) &= u_0 &\quad&\hbox{in }\ \Omega, 
\end{aligned}
\end{equation}
where 
\begin{equation} \label{ass-Omegap0}
\hbox{either }\ \Omega\subset\R^n \hbox{ is bounded and smooth or }\Omega=\R^n,
\end{equation}
\begin{equation} \label{ass-u0intro}
u_0\in C(\bar\Omega)\cap L^\infty(\Omega), \quad u_0\ge0,
\end{equation}
and $T=T(u_0)\leq\infty$ is the maximal existence time.
Of course, the boundary condition is not present if $\Omega=\R^n$.
We will assume that either the nonlinearity $f$ satisfies 
\begin{equation} \label{f10}
f\in C^1([0,\infty))\cap  C^2((0,\infty)),\quad\hbox{$f\ge 0$,  \quad $f(0)=0$,\quad $f$ convex,}
\quad \int^\infty\frac{ds}{f(s)}<\infty,
\end{equation}
or $f\in C^1$ satifies conditions \eqref{f1}--\eqref{f4} below
(those conditions require, among other things, a Sobolev-critical growth of $f$,
but do not require $f$ to be convex).

Our assumptions guarantee that the solution $u$ of
\eqref{MPf} with $u_0$ suitably large blows up in finite time
(as is well known, if $f$ is convex and nonnegative at infinity, then the integral condition in \eqref{f10}
is in fact necessary and sufficient for this to happen).
On the other hand, if $\Omega$ is bounded and $f'(0)$ is small enough or if $\Omega=\R^n$
and $f(u)\leq Cu^q$ for some $q>1+2/n$ and $u$ small,
then the solution $u$ of
\eqref{MPf} with $u_0$ suitably small is global and tends to zero as
$t\to\infty$.
We will be mainly interested in the behavior of 
solutions lying on or below the borderline between global existence and blow-up.
To be more precise, we need the following definition\footnote{The notion 
``$\eps f$-(sub)threshold'' is new, while
``$\eps$-(sub)thresholds'' or ``thresholds'' are sometimes called ``(sub)thresholds''
or ``strong thresholds'', respectively, see \cite{Q17},
for example.}.

\begin{definition} \label{defThr} 
Let $u^*$ be a solution of \eqref{MPf}
with initial data $u_0^*\geq0$. We say that $u$ is:
\vskip 3pt

$\bullet${\hskip 3pt}an {\bf $\eps$-threshold} if, for any $\eps>0$,
we have $T(u_0)<\infty$ whevener $u_0\ge(1+\eps)u_0^*$, and
$T(u_0)=\infty$ whevener $u_0^*\ge(1+\eps)u_0$.

\vskip 3pt

$\bullet${\hskip 3pt}an {\bf $\eps f$-threshold} if, for any $\eps>0$,
we have $T(u_0)<\infty$ whevener $u_0\ge u_0^*+\eps f(u_0^*)$, and
$T(u_0)=\infty$ whevener $u^*_0\ge u_0+\eps f(u_0)$.

\vskip 3pt

$\bullet${\hskip 3pt}a {\bf threshold} if, for any $u_0\not\equiv u_0^*$,
we have $T(u_0)<\infty$ whevener $u_0\ge u_0^*$, and
$T(u_0)=\infty$ whevener $u^*_0\ge u_0$.

\smallskip
Let $u^*$ be an $\eps$-threshold with initial data $u_0^*$.
If $u_0$ satisfies $(1+\eps)u_0\le u_0^*$ for some $\eps>0$ 
or $u_0+\eps f(u_0)\le u_0^*$ for some $\eps>0$
or $u_0\le u_0^*$ and $u_0\not\equiv u_0^*$,
then the solution $u$ with initial data $u_0$ 
is called an {\bf $\eps$-subthreshold} or {\bf $\eps f$-subthreshold} or
{\bf subthreshold}, respectively.
\end{definition}

Since we consider only bounded initial data, our assumptions on $f$
guarantee that\footnote{assuming $f(u_0^*)\not\equiv0$ for the first implication} 
$$\hbox{$u^*$ is a threshold} \Rightarrow \hbox{$u^*$  is an $\eps f$-threshold} \Rightarrow \hbox{$u^*$ is an $\eps$-threshold.}$$
It is also easy to see that these three notions coincide if $\Omega$ is bounded.
On the other hand, if $\Omega=\R^n$, 
then we will show (see Remark~\ref{rem-epsf}) that $\eps$-thresholds need not be $\eps f$-thresholds
and $\eps f$-thresholds need not be thresholds.
It is also known that $\eps$-thresholds
can be global and bounded (and even converge to zero), global unbounded,
and they can also blow up in finite time.
Here and in what follows, {\bf bounded} or {\bf unbounded} solution means 
$\sup_{t>0}\|u(\cdot,t)\|_\infty<\infty$
or
$\limsup_{t\to\infty}\|u(\cdot,t)\|_\infty=\infty$, respectively.
Definition~\ref{defThr} also implies that\footnote{assuming $f(u_0^*)\not\equiv0$ for the second implication}
$$\hbox{$u$ is an $\eps$-subthreshold} \Rightarrow \hbox{$u$  is an $\eps f$-subthreshold} \Rightarrow \hbox{$u$ is a subthreshold.}$$
It is known that in some cases, some or all 
subthresholds are bounded
or even tend to zero as $t\to\infty$,
but these properties are not true in general (see below).

We will be interested in sufficient conditions guaranteeing
positive answers to the following questions:

\vskip 1pt

(Q1) Are subthresholds bounded~?

\vskip 1pt

(Q2) Do subthresholds converge to zero as $t\to\infty$~?

\vskip 1pt

(Q3) Are $\eps$-thresholds thresholds~?

\vskip 1pt

(Q4) Are global unbounded solutions thresholds~?

\smallskip

In order to discuss related known results,
let us consider the model case $f(u)=u^p$ with $p>1$,
and let us also denote 
$$ 
 p_S :=\begin{cases}\ +\infty       & \hbox{ if }n\leq2, \\
                       1+\frac4{n-2} & \hbox{ if }n\geq3,
           \end{cases} \qquad
 p_{JL} :=\begin{cases}\ +\infty & \hbox{ if }n\leq10, \\
         1+4\frac{n-4+2\sqrt{n-1}}{(n-2)(n-10)} & \hbox{ if }n>10,
          \end{cases}
$$ 
the critical Sobolev and Joseph-Lundgren exponents.

First consider the case $p<p_S$. 
Then all global solutions are bounded
and the bound depends only on the bound of the initial data
(see \cite{G86} or \cite{Q21} if $\Omega$ is bounded or $\Omega=\R^n$, respectively). 
This implies that all $\eps$-thresholds are global and bounded.
If $\Omega=\R^n$, then all $\eps$-thresholds (hence also all subthresholds)
even tend to zero as $t\to\infty$ due to \cite{Q21}.
If $\Omega$ is bounded, then the nonexistence of multiple 
ordered positive steady states
and the comparison principle 
also imply that subthresholds tend to zero
(but this is not true for thresholds). 
See also \cite{Q03} and \cite{QS25p} for similar results in the case 
of more general subcritical nonlinearities $f$.

Next consider the case $p>p_S$. 
First assume that $\Omega$ is convex or $\Omega=\R^n$ and $u_0\in C^1$ satisfies 
$$u_0(x)+|x||\nabla u_0(x)|=o(|x|^{-2/(p-1)})\quad\hbox{as }\ |x|\to\infty.$$
If the solution $u$ is global, then it is bounded and decays to zero as $t\to\infty$, 
but the bound depends on $u_0$ and not just on $\|u_0\|_\infty$
(see \cite{CDZ,BS,S17} for $\Omega$ bounded; for $\Omega=\R^n$, see \cite{S17}, where different decay assumptions can also be found,
and see also \cite{MM09} for an earlier result in the radial case).
If $u$ is an $\eps$-threshold, then $T(u_0)<\infty$
(and the blow-up is often of type II, provided $p\ge p_{JL}$, see \cite{QS19} and the references therein).
On the other hand, if $\Omega=\R^n$ and the decay of $u_0$ is not fast enough,
then $u$ can be a global unbounded $\eps$-threshold
and it can also exhibit interesting asymptotic behavior,
for example
$$ 0=\liminf_{t\to\infty}\|u(\cdot,t)\|_\infty<
  \limsup_{t\to\infty}\|u(\cdot,t)\|_\infty=\infty,$$
see \cite{PY03,PY14,Q17}.
If $p\ge p_{JL}$, then there exist global unbounded radial $\eps$-thresholds 
which are not thresholds and some subthresholds can also be global unbounded,
see Remark~\ref{rem-pJL} for more information on their properties.
Let us also mention that if $p\in(p_S,p_{JL})$ and $\Omega$ is a ball,
then the boundedness of radial global solutions was proved in \cite{CFG08}
for a more general $f$ satisfying $\lim_{u\to\infty}u^{-p}f(u)=1$,
see also \cite{FK24} for a recent, more general result.

If $p\ge p_S$ and $\Omega$ is bounded and starshaped, then the convergence to zero of bounded solutions
follows from the compactness of the semiflow, existence of a Lyapunov functional,
and the nonexistence of positive steady states.
However, if $\Omega=\R^n$ and $p_S<p<p_{JL}$, for example,
then there exist bounded solutions which satisfy
$$ 0=\liminf_{t\to\infty}\|u(\cdot,t)\|_\infty<
  \limsup_{t\to\infty}\|u(\cdot,t)\|_\infty<\infty,$$
and their $\omega$-limit sets do not contain any positive steady state, see \cite{PY14s}.

Finally consider the critical case $p=p_S$.
If $\Omega$ is a ball and $u_0$ is radially symmetric and radially nonincreasing,
then the existence of global unbounded thresholds follows from
\cite{GaV97} (see also \cite{GK03} and \cite[Theorem 22.9]{QS19}).
If $\Omega$ is bounded (not necessarily a ball),
then the existence of global unbounded solutions has been
proved in \cite{CdPM20} and \cite{AdP25} for $n\ge5$ and $n=3$, respectively.
If $\Omega=\R^n$ and $n=3$ or $n=4$, then the existence 
of global unbounded ($\eps$-threshold, radially symmetric) solutions 
has been predicted in \cite{FK12} and the corresponding rigorous results have been
obtained in \cite{dPMW20} or \cite{WZZ24}, respectively.
On the other hand, as far as we know, if $u$ is a global unbounded $\eps$-threshold,
then the boundedness of subthresholds has not been studied in an explicit way.

Our results improve or generalize known results in several directions.
In the case of nonlinearities
satisfying \eqref{f10}, 
our main results
are summarized in the following theorems.

\goodbreak
\eject

\begin{theorem} \label{thm-introA}
Let $\Omega$ be bounded and smooth and assume \eqref{ass-u0intro}-\eqref{f10}.

\begingroup
    \addtolength{\leftmargini}{-10pt}
\begin{itemize}
\item[(i)] 
Any subthreshold is bounded.  
If in addition $f$ is strictly convex, then any subthreshold $u$ satisfies $\lim_{t\to\infty}\|u(\cdot,t)\|_\infty=0$.

\smallskip
\item[(ii)] 
Any global unbounded solution is a threshold.
\end{itemize}
\endgroup
\end{theorem}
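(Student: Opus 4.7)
The plan for (i) is to upgrade the non-strict subthreshold inequality $u_0\le u_0^*$ to a strict \emph{multiplicative} one at some positive time and then to exploit the convexity of $f$ together with the threshold property of $u^*$. The assumptions \eqref{f10} imply $f'\ge0$, so on any time interval $[0,t_1]\subset[0,T(u_0^*))$ the difference $w:=u^*-u$ satisfies a linear parabolic inequality with bounded nonnegative zero-order coefficient. Since $w(\cdot,0)=u_0^*-u_0\not\equiv0$, the strong maximum principle and Hopf boundary lemma yield $w(\cdot,t_0)\ge\delta\phi_1$ in $\bar\Omega$ for some $t_0>0$ and $\delta>0$, where $\phi_1$ is the positive first Dirichlet eigenfunction. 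Parabolic regularity gives $u(\cdot,t_0)\in C^1(\bar\Omega)$ with $u=0$ on $\partial\Omega$, hence $u(\cdot,t_0)\le C\phi_1$, and therefore $(1+\alpha)u(\cdot,t_0)\le u^*(\cdot,t_0)$ for some $\alpha>0$. After translating time by $t_0$, $u$ is an $\eps$-subthreshold of the shifted threshold.

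By convexity of $f$ with $f(0)=0$, $f(\lambda s)\le\lambda f(s)$ for $\lambda\in[0,1]$, so the shifted function $\bar u(x,t):=(1+\alpha)^{-1}u^*(x,t+t_0)$ is a supersolution of the equation and $u\le\bar u$ on their common lifespan. The main obstacle is that when $u^*$ is itself global unbounded or blows up in finite time, this bound is useless in $L^\infty$; one must exploit the strict convexity surplus $\Phi:=(1+\alpha)^{-1}f(u^*)-f((1+\alpha)^{-1}u^*)\ge0$ (strict wherever $u^*>0$) together with the integral condition $\int^\infty ds/f(s)<\infty$ from \eqref{f10} to preclude $\limsup_{t\to\infty}\|u(\cdot,t)\|_\infty=\infty$. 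My approach would be to integrate $\Phi$ against a well-chosen test function and insert it either in the Duhamel formula for $\bar u-u$ or in the Lyapunov functional $E(u)=\tfrac12\int|\nabla u|^2-\int F(u)$, thereby producing either a bounded stationary supersolution dominating $u$ or an a priori bound on some coercive quantity along the trajectory. Converting ``comparison with a possibly unbounded supersolution'' into an unconditional $L^\infty$ bound is the hardest ingredient and, I expect, where the true novelty of the argument lies.

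Part (ii) follows cleanly from (i) together with the coincidence, on bounded $\Omega$, of the three notions of threshold noted right after Definition~\ref{defThr}. Let $\tilde u$ be global and unbounded. The ``$u_0\le\tilde u_0$, $u_0\not\equiv\tilde u_0\ \Rightarrow\ $global'' half of the threshold condition is immediate by comparison, so only finite-time blow-up above $\tilde u_0$ requires proof. Consider the dilation family and set $\lambda^*:=\sup\{\lambda>0:T(\lambda\tilde u_0)=\infty\}$. The integral condition in \eqref{f10} forces blow-up for large data, so $\lambda^*<\infty$, and globality of $\tilde u$ gives $\lambda^*\ge1$. If $\lambda^*>1$, then the solution starting from $\lambda^*\tilde u_0$ is an $\eps$-threshold, hence a threshold by the bounded-domain equivalence, and $\tilde u$ is a (strict) subthreshold of it; by (i), $\tilde u$ would then be bounded, contradicting the hypothesis. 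Hence $\lambda^*=1$, so $\tilde u_0$ is itself an $\eps$-threshold and therefore a threshold.
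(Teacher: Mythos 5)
There is a genuine gap, and it sits exactly where you flag it yourself: part (i) is never proved. Your reduction via the strong maximum principle and Hopf's lemma to $(1+\alpha)u(\cdot,t_0)\le u^*(\cdot,t_0)$ is fine (it is the same reduction the paper makes through the proof of \cite[Lemma 17.9]{QS19}, plus a short argument that the time-shifted solution is still a threshold), but after that you only obtain $u\le\bar u$ with $\bar u$ a supersolution built from $u^*$, which --- as you note --- may be unbounded or even blow up, so nothing is bounded yet. The two directions you sketch for closing this (inserting a ``convexity surplus'' into Duhamel, or into the Lyapunov functional $E$) are not a proof and are unlikely to become one: in the Sobolev-supercritical regime an energy bound does not yield an $L^\infty$ bound (this is precisely why the result was previously known only for convex domains via \cite{CDZ,BS,S17}), and comparison with an unbounded global supersolution plus a pointwise convexity defect gives no coercive quantity by itself. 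The paper's actual mechanism is different and is the heart of the matter (Theorem~\ref{thm-subsol2}): first replace $u^*$ by the \emph{global} solutions with data $\lambda_j u_0^*$, $\lambda_j\nearrow1$ (global because $u^*$ is an $\eps$-threshold); then, for a subsolution-ordered pair with $u_0\ge v_0+\eps f(v_0)$, iterate Lemma~\ref{lem-subsol} to get $\phi^{(k)}(v)\le u$ with $\phi=Id+\eta f$, hence $f^k(v)/v^{k-1}\le C_k(1+u)$ for every $k$; combine this with Kaplan's uniform-in-time $L^1_\delta$ bound for global solutions (Lemma~\ref{lem-UB}) to put the potential $W=|v|^{\frac1k-1}f(v)$ in $L^\infty(0,\infty;L^k_\delta)$ with $k$ large; and finally bootstrap through the smoothing Lemma~\ref{lem-smoothing} for the sublinear auxiliary problem $v_t-\Delta v=W|v|^{1-1/k}$ to reach $L^\infty$, with a bound uniform in $j$ so that $u_j\nearrow u$ gives the claim. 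None of this (nor any substitute for it) appears in your proposal, so the central assertion of the theorem remains unestablished.

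Part (ii) is essentially sound \emph{modulo} (i): your $\lambda^*$-ray construction correctly produces an $\eps$-threshold $\lambda^*\tilde u_0$ with $\lambda^*\in[1,\infty)$ (blow-up for large multiples follows from Kaplan's method and $\int^\infty ds/f<\infty$), rules out $\lambda^*>1$ by applying (i) to $\tilde u$ as a subthreshold, and then upgrades ``$\eps$-threshold'' to ``threshold'' by the bounded-domain coincidence stated after Definition~\ref{defThr} (which itself needs the small time-shift argument the paper gives, but is legitimately quotable). This is a slightly different, and valid, reduction from the paper's, which instead deduces (ii) directly from Theorem~\ref{thm-subsol2}: if $v_0\ge u_0$, $v_0\not\equiv u_0$ and $v$ were global, then $u$ would be bounded, a contradiction. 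But since your (ii) rests entirely on (i), the gap above propagates and the proposal as a whole does not prove the theorem.
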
 

\goodbreak

\begin{theorem} \label{thm-intro}
Let $\Omega=\R^n$ and assume \eqref{ass-u0intro}-\eqref{f10}. 

\begingroup
    \addtolength{\leftmargini}{-10pt}
    \begin{itemize}
\item[(i)] If $u^*$ is an $\eps$-threshold, then each $\eps f$-subthreshold is bounded.  
\smallskip

\item[(ii)] Any global unbounded solution is an $\eps f$-threshold.
\smallskip

\item[(iii)] Let $u^*$ be an $\eps f$-threshold, with initial data $u_0^*$ such that
\begin{equation} \label{assL1}
\hbox{ $u_0^*$ is radial, radially nonincreasing and $f(u_0^*)\in L^1(\R^n)$}.
\end{equation}
Then $u^*$ is a threshold and each subthreshold is bounded.
\end{itemize}
\endgroup
\end{theorem}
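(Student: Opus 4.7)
The proof rests on the following key comparison lemma: if $u_1, u_2$ are solutions of \eqref{MPf} with $u_1(\cdot,0) + \eta f(u_1(\cdot,0)) \le u_2(\cdot,0)$ for some $\eta>0$, then $u_2(\cdot,t) \ge u_1(\cdot,t) + \eta f(u_1(\cdot,t))$ on their common existence interval. To prove it I would set $\phi := \eta f(u_1) - (u_2 - u_1)$ and use $\partial_t u_i - \Delta u_i = f(u_i)$ together with the convexity inequalities $f(u_2) - f(u_1) \ge f'(u_1)(u_2 - u_1)$ and $f''(u_1) \ge 0$ to derive
$$ \phi_t - \Delta\phi - f'(u_1)\phi \;\le\; -\eta f''(u_1)|\nabla u_1|^2 \;\le\; 0.$$
Since $\phi(\cdot,0)\le 0$, the parabolic maximum principle then yields $\phi\le 0$; the coefficient $f'(u_1)$ is bounded on compact subintervals of the existence interval by classical regularity, which justifies the maximum principle on $\R^n$.

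For part~(ii), the two directions of the $\eps f$-threshold property are treated separately. The \emph{global direction} (if $u_0^* \ge u_0 + \eps f(u_0)$ then $T(u_0)=\infty$) is immediate: the lemma with $u_1=u$, $u_2=u^*$ gives $u\le u^*$ on the common interval, and since $u^*$ is global, so is $u$. For the \emph{blow-up direction} (if $u_0 \ge u_0^* + \eps f(u_0^*)$ then $T(u_0)<\infty$), the lemma with $u_1=u^*$, $u_2=u$ gives $u \ge u^* + \eps f(u^*)$ on the common interval. Assuming for contradiction that $u$ is global, this pointwise estimate combined with the unboundedness of $u^*$ and the superlinearity $f(s)/s\to\infty$ produces space-time points where $u \ge (1+\lambda_n) u^*$ with $\lambda_n\to\infty$; a restart argument from such points, using the ODE blow-up criterion $\int^\infty ds/f(s) < \infty$, should force finite-time blow-up of $u$. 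Making this restart rigorous is the \textbf{main technical obstacle}.

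For part~(i), the lemma with $u_1=u$, $u_2=u^*$ yields $u^*(t) \ge u(t) + \eps f(u(t))$, hence $u(t) \le \psi^{-1}(u^*(t))$ pointwise, where $\psi(s):=s+\eps f(s)$. If $u^*$ is bounded, then $u$ is bounded. If $u^*$ is global unbounded, then by~(ii) it is an $\eps f$-threshold, and one combines the pointwise estimate with the $\eps_0$-threshold hypothesis, via a restart argument analogous to the one in~(ii), to prevent $u$ from becoming unbounded. If instead $u^*$ blows up in finite time, the strict subthreshold gap $u^*-u \ge \eps f(u)$ prevents $u$ from catching up near the blow-up set of $u^*$, and the pointwise bound $u \le \psi^{-1}(u^*)$ confines $u$ elsewhere.

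For part~(iii), the additional hypothesis~\eqref{assL1} furnishes the quantitative ingredient needed to handle arbitrary subthresholds. For such a $u$ (with $u_0\le u_0^*$, $u_0\not\equiv u_0^*$), the strong maximum principle applied to $u^*-u$ yields a strict gap at any $\tau>0$. Using heat semigroup estimates, together with the integrability $f(u_0^*)\in L^1(\R^n)$ and the radial monotonicity of $u_0^*$, one upgrades this to the quantitative bound $u^*(\cdot,\tau) - u(\cdot,\tau) \ge \delta f(u(\cdot,\tau))$ for some $\delta>0$. This makes $u(\cdot,\tau)$ a $\delta f$-subthreshold of $u^*(\cdot,\tau)$, so applying part~(i) to the shifted problem gives $u$ bounded on $[\tau,\infty)$, hence globally. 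The upgrade of $u^*$ to a full threshold follows from the same quantitative gap combined with the $\eps f$-threshold property of $u^*$.
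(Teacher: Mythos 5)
Your key comparison lemma is exactly the paper's Lemma~\ref{lem-subsol}, and the ``global direction'' of (ii) is handled the same way; but beyond that the proposal is missing the central mechanism of the proof, and the places where you improvise instead do not work. The heart of the matter is a boundedness theorem (the paper's Theorem~\ref{thm-subsol2}): \emph{any} solution $v$ whose data satisfy $u_0\ge v_0+\eps f(v_0)$ below a \emph{global} solution $u$ is uniformly bounded, with an a priori bound depending only on $\eps,\|v_0\|_\infty,f$. This is proved by iterating the comparison lemma with $\phi=\mathrm{Id}+\eta f$ to get $\phi^{(k)}(v)\le u$, hence $f^k(v)/v^{k-1}\le C_k(1+u)$ for every $k$; combining this with a Kaplan-type a priori bound of global solutions in $L^1_{ul}$; and feeding the resulting potential bound into a smoothing lemma for the sublinear equation $v_t-\Delta v=W|v|^{1-1/m}$ in uniformly local Lebesgue spaces. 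None of this appears in your sketch, yet all three parts rest on it. In particular, the blow-up direction of (ii) is then immediate by contradiction (if the larger solution were global, the unbounded solution sitting an $\eps f$-gap below it would be bounded), whereas your ``restart at points where $u\ge(1+\lambda_n)u^*$ plus ODE criterion'' is not a proof on $\R^n$ (pointwise largeness at isolated points gives no Kaplan-type or ordering information), and you yourself flag it as the main obstacle.

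Part (i) is also mishandled: an $\eps$-threshold $u^*$ need not be global, and when it is unbounded or blows up the pointwise bound $u\le\psi^{-1}(u^*)$ gives no control at all (near blow-up $\psi^{-1}(u^*)$ is itself unbounded, and after $T(u_0^*)$ there is nothing to compare with). The paper instead replaces $u_0^*$ by $\lambda_j u_0^*$ with $\lambda_j\nearrow1$, which are global by the $\eps$-threshold property, notes $\lambda_j u_0+\eps f(\lambda_j u_0)\le\lambda_j u_0^*$ since $f(s)/s$ is nondecreasing, and applies the uniform a priori estimate of Theorem~\ref{thm-subsol2}(iii) to $u_j$, letting $u_j\nearrow u$. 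Finally, in (iii) your proposed upgrade $u^*(\cdot,\tau)-u(\cdot,\tau)\ge\delta f(u(\cdot,\tau))$ is false in general: if $u_0=u_0^*$ outside a compact set, the difference at time $\tau$ has Gaussian-type tails while $f(u(\cdot,\tau))$ decays only polynomially, so no such $\delta$ exists; and the non-coincidence of the three threshold notions on $\R^n$ (Remark~\ref{rem-epsf}) shows that no soft argument of this kind can bridge the gap. The actual role of \eqref{assL1} is different: radial monotonicity and $f(u_0^*)\in L^1$ allow the intersection-comparison (zero number) and $L^1$-mass arguments of \cite{Q17} to upgrade the $\eps f$-threshold to a threshold and to bound arbitrary subthresholds, in combination with the boundedness of $\eps f$-subthresholds from part (i).
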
 

\goodbreak

We also obtain similar assertions for $f$ satisfying \eqref{f1}--\eqref{f4}
(see Section~\ref{sec-3})
and provide sufficient conditions for the convergence of subthresholds to zero
(see Corollary~\ref{cor-nonrad} and Theorem~\ref{thm-nonrad-3}).

\vskip 2pt
Our results are new even for the model case $f(u)=u^p$ with $p>1$.
For example:

\vskip 2pt

$\bullet$ If $p>p_S$ and $\Omega$ is bounded, then the boundedness and convergence to zero
of all subthresholds is known for convex domains (see \cite{CDZ,BS,S17}),
but our results guarantee these properties without convexity assumption on the domain.

\vskip 2pt

$\bullet$ Similarly, if $p=p_S$, $\Omega$ is a general bounded domain, and $u$ is
a global
unbounded solution, then the boundedness and the convergence to zero
of all solutions lying below $u$ 
and the blow-up of all solutions lying above $u$ were also unknown.

\vskip 2pt

$\bullet$ If $\Omega=\R^n$, $n\in\{3,4\}$, and $p=p_S$,
then radially nonincreasing initial data $u_0$ of known global unbounded solutions  
(cf.~\cite{dPMW20,WZZ24}) satisfy 
$\lim_{|x|\to\infty}{|x|^\gamma}u_0(x)=A>0$ for some $A>0$ and $\gamma>n-2$, hence $f(u_0)\in L^1(\R^n)$,
and Theorem~\ref{thm-intro}(iii)
shows that such solutions 
are thresholds and all subthresholds are bounded. 

\vskip 2pt

$\bullet$ Let $\Omega=\R^n$, $p>p_{JL}$ and let $U_*(x)=c_p|x|^{-2/(p-1)}$ denote the singular 
stationary solution. 
For $\alpha>0$, let $S_\alpha$ be the set of solutions $u$
with initial data $u_0$ lying below $U_*$ and satisfying
$u_0(x)=U_*(x)-\eps|x|^{-\alpha}$ for some $\eps>0$ and $|x|$ large.
Then all solutions in $S_\alpha$ are global unbounded whenever $\alpha$ is sufficiently large,
and the value $\ell=\ell(n,p)>0$ of the infimum of all $\alpha$ with this property is known
from  \cite{PY03,FKWY06,FKWY07,FKWY11} (see Remark~\ref{rem-pJL}).
On the other hand, if $\tilde S_\beta$ is the set of
solutions $u$ with radially nonincreasing initial data $u_0$ satisfying
$u_0(x)=U_*(x)+\eps|x|^{-\beta}$ for some $\eps>0$ and $|x|$ large,
and
$\tilde\ell$ is the infimum of all $\beta>0$ such that $\tilde S_\beta$ contains a global solution,
then it is well known that $\tilde\ell\ge2/(p-1)$ and
\cite{Q17} implies $\tilde\ell\le n$,
but our results guarantee a more precise estimate $\tilde\ell\le 2p/(p-1)$.
In addition, our estimate of $\tilde\ell$ (as well as our estimate of $\ell$)
remain true for more general
initial data (and nonlinearities $f$), see Remark~\ref{rem-pJL}.

\vskip 2pt

We stress that 
the class of nonlinearities $f$ satisfying \eqref{f10}
is very large: it also contains slightly superlinear functions like
$f(u)=u\log^2(1+u)$ as well as exponentially growing functions like
$f(u)=e^u-u-1$, for example.
Further examples and applications of our results can be found in Section~\ref{sec-ex}.

\vskip 2pt
In Section~\ref{sec-sub} we state additional results for nonlinearities
satisfying assumptions \eqref{f10}, and prove them along with Theorems~\ref{thm-introA} and~\ref{thm-intro}.
In Section~\ref{sec-3} we state and prove our results for nonlinearities
satisfying assumptions \eqref{f1}--\eqref{f4}.
The arguments in the proofs are very different.
While in Section~\ref{sec-sub} we use bootstrap arguments and estimates in weighted or uniformly local
Lebesgue spaces along with subsolutions of the form $u+\eps f(u)$ (requiring $f$ to be convex),
in Section~\ref{sec-3} we use subsolutions of the form $(1+\eps)u$ and energy arguments,
but also rescaling and intersection properties of entire positive solutions 
of the equation $-\Delta u=u^{p_S}$ (and our arguments, among other things, 
require $f(u)$ to behave like $u^{p_S}$ for large $u$, see \eqref{f2}). 

\goodbreak

\section{Convex nonlinearities} 
\label{sec-sub}

Theorems~\ref{thm-introA} and~\ref{thm-intro} will be consequences of the following theorem,
which establishes the boundedness of solutions starting below a global solution,
and of its corollary. The latter also provides sufficient conditions for the convergence of subthresholds to zero.

\begin{theorem} \label{thm-subsol2}
Assume \eqref{ass-Omegap0}, \eqref{f10}, 
and let $u_0, v_0$ satisfy \eqref{ass-u0intro}
with $u_0\ge v_0$.
Denote by $u, v$ the corresponding solutions of \eqref{MPf}
and assume that $u$ is global.

\smallskip
\begingroup
    \addtolength{\leftmargini}{-10pt}
\begin{itemize}
\item[(i)]
If $\Omega$ is bounded and $v_0\not\equiv u_0$, then $v$ is bounded.
If in addition $f$ is strictly convex, then $\lim_{t\to\infty}\|v(\cdot,t)\|_\infty=0$.

\smallskip
\item[(ii)] Assume $\Omega=\R^n$. If $u_0\ge (1+\eps)v_0$ for some $\eps>0$,
then $v$ is bounded.
The conclusion is more generally true whenever
 \be{hypepsf}
 \hbox{$u_0\ge v_0+\eps f(v_0)$ for some $\eps>0$.}
 \ee
 
 \smallskip
\item[(iii)] Under assumption \eqref{hypepsf}, $v$ satisfies 
the a priori estimate
 \be{AEepsf}
 \sup_{t\ge 0}\|v(t)\|_{L^\infty(\Omega)} \le C(\eps,M,\Omega,f)\quad\hbox{if $\|v_0\|_\infty\le M$.}
 \ee
 \end{itemize}
 \endgroup
\end{theorem}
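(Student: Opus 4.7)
My plan is to build a subsolution of the form $w:=v+\eps f(v)$, compare it with $u$ to obtain pointwise control of $f(v)$, and then bootstrap via weighted or uniformly local Lebesgue-space estimates.

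The subsolution property is immediate from the convexity of $f$: a direct computation gives
\[
w_t-\Delta w=(1+\eps f'(v))f(v)-\eps f''(v)|\nabla v|^2,
\]
while the tangent inequality $f(v+\eps f(v))\ge f(v)+\eps f'(v)f(v)$ shows that $(1+\eps f'(v))f(v)\le f(w)$, hence $w_t-\Delta w\le f(w)$. Under~\eqref{hypepsf}, $w(\cdot,0)\le u_0$ and $w$ has the required boundary (or decay) behaviour, so the comparison principle yields $w\le u$, that is $\eps f(v)\le u-v\le u$ pointwise, on the common existence interval. Since $u$ is global, this already forces $v$ to be global. For part~(i) I would reduce to this framework: $u-v$ satisfies a linear parabolic inequality with smooth coefficients, so the Hopf lemma applied at some small $\tau>0$ gives $u(\tau)-v(\tau)\ge c\phi_1$ where $\phi_1>0$ is the first Dirichlet eigenfunction, and comparing the boundary rates of $\phi_1$ and $f(v(\tau))$ yields $u(\tau)\ge v(\tau)+\tilde\varepsilon\, f(v(\tau))$ for some $\tilde\varepsilon>0$; the argument of~(ii) then applies from time $\tau$.

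The heart of the proof, and the main obstacle, is to promote the pointwise bound $\eps f(v)\le u$ into a uniform $L^\infty$-bound on $v$, despite the fact that $u$ itself may be globally unbounded. The driving observation is that globality of $u$ already forces a uniform-in-$t$ weighted integral bound on $u$: testing the equation for $u$ against $\phi_1$ on a bounded domain, or against a uniformly local weight on $\R^n$, and invoking Jensen's inequality produces the differential inequality $y'\ge -\lambda_1 y+f(y)$ for $y(t):=\int u(t)\,\phi_1$, and the superlinearity $\int^\infty ds/f(s)<\infty$ forces any such $y$ crossing the unique positive root of $f(s)=\lambda_1 s$ to blow up in finite time, contradicting globality of $u$. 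Combined with $\eps f(v)\le u$, this gives a uniform weighted $L^1$-bound on $f(v)$, which I would then feed into Duhamel's formula together with heat-semigroup smoothing in weighted (or uniformly local) $L^q$-spaces, iterating the exponent until reaching $L^\infty$. The universal bound~\eqref{UBepsf} and the a priori estimate~\eqref{AEepsf} of part~(iii) follow by tracking the dependence of constants through this iteration; the delicate point is that the bootstrap must close \emph{uniformly} in $t$, which forces the superlinearity $\int^\infty ds/f(s)<\infty$ to be exploited at every step and not only at the first.
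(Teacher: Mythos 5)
Your preparatory steps are sound and match the paper's: the subsolution computation for $w=v+\eps f(v)$ and the comparison $w\le u$ (the paper's Lemma~\ref{lem-subsol}), the Kaplan eigenfunction bound $\sup_t\|u(t)\|_{X_1}\le M$ for the global solution (Lemma~\ref{lem-UB}, with $X_1=L^1_\delta(\Omega)$ or $L^1_{ul}$), and the Hopf-type reduction of part~(i) to the $\eps f$-ordered case after a time shift. The genuine gap is in what you yourself call the heart of the proof. From $\eps f(v)\le u$ and the $X_1$-bound on $u$ you only get a uniform-in-time $X_1$-bound on $f(v)$, and a Duhamel iteration started from that cannot reach $L^\infty$: the smoothing estimate $\|e^{t\Delta}\|_{\mathcal{L}(X_1,X_q)}\le Ct^{-m_0(1-1/q)}$ (with $m_0=(n+1)/2$ or $n/2$) has an integrable singularity only for $q<m_0/(m_0-1)$, so one step gives at best $v\in L^\infty_t X_q$ for such $q$; and for a general $f$ satisfying \eqref{f10} (e.g.\ $f(u)=e^u-u-1$, or $u^p$ with $p$ Sobolev-supercritical), knowing $v\in X_q$ for some finite $q$ yields no bound on $f(v)$ beyond the $X_1$ bound you already have. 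The source term therefore never improves along the iteration and the exponents do not close, however one tries to re-invoke $\int^\infty ds/f(s)<\infty$ --- that condition was already spent in producing the $X_1$ bound on $u$.

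The missing idea is to iterate the comparison at the level of initial data rather than the exponents in Duhamel's formula. Fixing an integer $k>m_0$ and a small $\eta=\eta(\eps,k,\|v_0\|_\infty)$, the iterates $\phi^{(i)}(v_0)$ of $\phi=\mathrm{Id}+\eta f$ remain below $u_0$ (since $f(v_0)\le Lv_0$ on the bounded range of $v_0$, one gets $\phi^{(i)}(v_0)\le v_0+(2i-1)\eta f(v_0)\le v_0+\eps f(v_0)\le u_0$), and applying your one-step comparison $k$ times along the chain of solutions with these data gives $\phi^{(k)}(v)\le u$ for all times. Combined with the elementary estimate $f^k(s)/s^{k-1}\le C_k\bigl(1+\phi^{(k)}(s)\bigr)$, this controls $f^k(v)/v^{k-1}$ in $X_1$ uniformly, i.e.\ the potential $W:=|v|^{\frac1k-1}f(v)$ in $X_k$ with $k>m_0$; rewriting the equation for $v$ as the \emph{sublinear} problem $v_t-\Delta v=W|v|^{1-\frac1k}$, the semigroup bootstrap then does close and yields \eqref{UBepsf} and \eqref{AEepsf}. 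Without this device (or an equivalent way of upgrading the $X_1$ information on $f(v)$ to $X_m$-control, $m>m_0$, of a subcritical reformulation), your sketch stalls after the first Duhamel step.
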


\begin{corollary} \label{cor-nonrad}
Assume \eqref{ass-Omegap0}--\eqref{f10}. 
Denote by $u$ the corresponding solution of \eqref{MPf}
and assume that $u$ is global and unbounded.
If $\Omega=\R^n$, then assume also that $u_0$ is radial, radially
nonincreasing and $f(u_0)\in L^1(\R^n)$.
Then $u$ is a threshold.
More precisely, let $v$ be the solution with initial data $v_0$,
where $v_0$ satisfies \eqref{ass-u0intro}
and $v_0\not\equiv u_0$. 
Then we have the following:

\smallskip
\begingroup
    \addtolength{\leftmargini}{-10pt}
\begin{itemize}
\item[(i)] If $v_0\ge u_0$, then $v$ blows up in finite time.

\smallskip

\item[(ii)] If $v_0\le u_0$, then $v$ is global and bounded.

\smallskip

\item[(iii)]
Assume $\Omega=\R^n$, $f(u)=u^{p_S}$. Denote $r:=|x|$ and 
assume $u_0=u_0(r)$ is piecewise $C^1$ and 
\begin{equation} \label{u0-supp1}
\hbox{either \ $u_0(r)=0$ for
$r\ge R_0$, \ \ \ $u_0'(r)<0$ for $r\in[R_0-\eta,R_0]$,}
\end{equation}
\begin{equation} \label{u0-supp2}
\hbox{or \ 
$\lim\limits_{r\to\infty} r^\gamma u_0(r) 
 = -\lim\limits_{r\to\infty}\frac1\gamma  r^{\gamma+1} u_0'(r)=C_0>0$,
\ \ $\gamma>n-2$,}
\end{equation}
where $R_0>\eta>0$.
If $v_0\le u_0$, then $\lim_{t\to\infty}\|v(\cdot,t)\|_\infty=0$. 
 \end{itemize}
 \endgroup
\end{corollary}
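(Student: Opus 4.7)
The plan is to obtain (i)--(ii) purely by comparison from the theorems of the previous section, and then to derive (iii) and (iv) by combining the resulting boundedness with Lyapunov/compactness tools adapted to each setting.

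Parts (i) and (ii) are both special cases of the assertion that $u$ is a threshold. In the bounded case this is immediate: (ii) follows from Theorem~\ref{thm-subsol2}(i) applied with the global $u$ dominating $v_0$ from above, while (i) follows by contradiction, since if $v$ with $v_0\ge u_0$, $v_0\not\equiv u_0$, were global, applying Theorem~\ref{thm-subsol2}(i) to the pair $(v,u)$ would force $u$ to be bounded. In the Cauchy case, Theorem~\ref{thm-intro}(ii) promotes the global unbounded $u$ to an $\eps f$-threshold, and the radial monotonicity together with $f(u_0)\in L^1(\R^n)$ trigger Theorem~\ref{thm-intro}(iii), upgrading $u$ to a full threshold; (i) and (ii) then follow from the definition.

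For (iii), part (ii) yields $v$ global and bounded, so by parabolic regularity the orbit $\{v(\cdot,t)\}_{t\ge 1}$ is relatively compact in $C_0(\bar\Omega)$. The energy $E[w]=\frac12\int_\Omega|\nabla w|^2 -\frac{1}{p+1}\int_\Omega w^{p+1}$ is a strict Lyapunov functional along the semiflow, so by the standard LaSalle invariance argument $\omega(v_0)$ consists of nonnegative solutions of $-\Delta w=w^p$ on $\Omega$ with $w|_{\partial\Omega}=0$. Starshapedness of $\Omega$ combined with $p\ge p_S$ and the Pohozaev identity forces $w\equiv 0$, whence $\|v(\cdot,t)\|_\infty\to 0$.

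For (iv), (ii) again gives $v$ global and bounded, but a more delicate argument is needed because in $\R^n$ with $p=p_S$ the Aubin--Talenti bubbles $U_\lambda$ provide a continuous family of positive steady states, so convergence to zero cannot come from nonexistence alone. The hypotheses \eqref{u0-supp1}--\eqref{u0-supp2} yield the pointwise bound $u_0(x)\le C(1+|x|)^{-\gamma}$ with $\gamma>n-2$, which is inherited by $v_0$. I would propagate this far-field decay to $v(\cdot,t)$ uniformly in $t$ via a heat-kernel comparison that exploits $\gamma>n-2$ to keep the nonlinear contribution subdominant outside a large ball. Since every bubble $U_\lambda$ with $\lambda>0$ decays \emph{exactly} like $|x|^{-(n-2)}$, this uniform outer decay excludes any $U_\lambda$ from $\omega(v_0)$; combined with the Lyapunov role of $E$ (applicable here thanks to $f(v_0)\in L^1$ and the $H^1$ control afforded by the a priori estimate \eqref{AEepsf}), this forces $\omega(v_0)=\{0\}$ and hence $\|v(\cdot,t)\|_\infty\to 0$. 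The main obstacle is exactly this last step: rigorously propagating the outer decay to all $t>0$ while excluding concentration at finite spatial scales, since on the whole space neither compactness nor nonexistence of steady states is automatic.
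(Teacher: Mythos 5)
Your treatment of (i)--(ii) in the bounded case and of (iii) matches the paper: (ii) from Theorem~\ref{thm-subsol2}(i), (i) by applying the same theorem to $(v,u)$ and contradicting unboundedness of $u$, and (iii) via LaSalle, nonexistence of positive steady states (Pohozaev on a starshaped domain) and compactness of the orbit. No issues there.

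For (i)--(ii) when $\Omega=\R^n$, your route is circular as written. You invoke Theorem~\ref{thm-intro}(ii) to promote $u$ to an $\eps f$-threshold, but in the paper's logical flow Theorem~\ref{thm-intro}(ii) is \emph{deduced from} Corollary~\ref{cor-nonrad}, not the other way around. The fix is easy and is what the paper does: show directly from Theorem~\ref{thm-subsol2} that $u$ is an $\eps f$-threshold (if $v_0\ge u_0+\eps f(u_0)$ and $v$ were global, Theorem~\ref{thm-subsol2} applied to $(v,u)$ would bound $u$; if $u_0\ge v_0+\eps f(v_0)$ then $v\le u$ is global). With that in place, routing through Theorem~\ref{thm-intro}(iii) to get both the threshold property and boundedness of all subthresholds is a legitimate alternative to the paper's own argument (which instead appeals to the proof of \cite[Theorem~2]{Q17} for the threshold claim, and then uses a separate contradiction involving an intermediate radially nonincreasing $w_1$ with $v(\cdot,1)\le w_1\le u(\cdot,1)$ to get boundedness of $v$). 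Just be aware that when you say ``(i) and (ii) then follow from the definition,'' (ii) is \emph{not} a consequence of being a threshold (which only gives global existence); it comes from the ``each subthreshold is bounded'' clause of Theorem~\ref{thm-intro}(iii).

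Part (iv) is where the proposal genuinely falls short, and you say so yourself. The outer-decay idea (propagate $u_0(x)\lesssim|x|^{-\gamma}$, $\gamma>n-2$, to all times, then exclude bubbles because they decay only like $|x|^{-(n-2)}$) is plausible but unproved: you would need a time-uniform far-field estimate, control of possible spatial translation of a developing bubble, and a Lyapunov argument on $\R^n$ that is not automatic since $v$ need not lie in the energy space $\mathcal E(\R^n)$. The paper takes an entirely different route: it compares $v$ from above with truncations $w_\eps(\cdot,\delta)=\min(u(\cdot,\delta),u(0,\delta)-\eps)$ of the unbounded solution $u$, uses the rescaling/intersection machinery of Section~\ref{sec-3} (Lemmas~\ref{lem-intersect-3}, \ref{lem1-3}, and the arguments from Theorem~\ref{thm-nonrad-3}(ii)) to show $w_\eps(\cdot,t)\to U_{A}$ locally uniformly with an $A$ independent of $\eps$, then exploits monotonicity and finiteness of the zero number $z(w_\eps(\cdot,t)-U_M)$ for a large $M$ (which requires the precise hypotheses \eqref{u0-supp1} or \eqref{u0-supp2} to control intersections of $u_0$ with $U_M$), and finally invokes \cite[Theorem~1.14]{GNW92} to conclude decay to zero. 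So your proposal correctly isolates the difficulty but does not give a proof; the actual argument is a zero-number argument, not an energy/decay one.
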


\begin{remark} \label{rem-decay} \rm
(i) Let $p=p_S$, $\Omega=\R^n$, $n\in\{3,4\}$.
 \cite[Conjecture 1.1]{FK12} predicts that 
if $u$ is an $\eps$-threshold with positive radial initial data satisfying 
\begin{equation} \label{decayFK}
\hbox{$\lim_{|x|\to\infty}|x|^\gamma u_0(x)=A>0$, where $\gamma>n-2$,}
\end{equation}
 then $u$ is global unbounded. 
Notice that the decay condition \eqref{decayFK} corresponds to the decay in
\eqref{u0-supp2}.
Global unbounded positive radial solutions satisfying \eqref{decayFK}
have been constructed in \cite{dPMW20,WZZ24}
and this result combined with arguments in
\cite[Remark~2(i)]{Q17} shows that global unbounded solutions with radially
nonincreasing initial
data satisfying assumptions \eqref{u0-supp1} or \eqref{u0-supp2} exist.
We can actually show that:
$$\left. \begin{aligned}
&\hbox{If $u$ is an $\eps$-threshold with radial nonincreasing, piecewise $C^1$ initial data }\\
&\hbox{satisfying \eqref{u0-supp1}, then $u$ is  global and unbounded.}
\end{aligned}\ \right\} 
$$
First note that \cite{GaV97} implies $u$ is global.
 To show that $u$ is unbounded,  we take
a global unbounded solution $v$ with positive radial initial data $v_0\in C^1$,
and let $w$ be the (global unbounded) rescaled solution with initial data $w_0(x):=\eta v_0(\eta^{(p_S-1)/2}x)$.
The arguments in \cite[Remark~2]{Q17} guarantee that we may assume that $v_0$ (hence also $w_0$) is radially nonincreasing. 
If $\eta>0$ is small enough, then $u_0(0)>w_0(0)$ and $z(u_0-w_0)=1$,
where $z(\cdot)$ denotes the zero number in $(0,\infty)$
(see \cite[Subsection~52.8]{QS19} for the definition and properties of the zero number).
 We claim that $u(0,t)\ge w(0,t)$ for all $t$  (so that $u$ is unbounded).
Assume to the contrary that $u(0,t_0)<w(0,t_0)$ for some $t_0>0$.
Then we can find $\lambda>1$ such that the solution $u_\lambda$ with initial data $\lambda u_0$
satisfies $\lambda u_0(0)>w_0(0)$, $z(\lambda u_0-w_0)=1$ and $u_\lambda(0,t_0)<w(0,t_0)$.
This implies $u_\lambda\le w$  for all $t\in[t_0,T(\lambda u_0))$, hence $u_\lambda$ is global, which yields a contradiction.

\smallskip
 
(ii) The assumption $f(0)=0$ in Theorems~\ref{thm-introA} and \ref{thm-subsol2} cannot be replaced
by $f(0)\ge 0$. Indeed consider $f(u)=(u+a)^p$ with $a>0$, $n\ge 11$ and $p>p_{JL}$.
There exists $R=R(a)>0$ such that
problem \eqref{MPf} with $\Omega=B_R$ admits infinitely many ordered global unbounded solutions.
In fact any nonnegative solution starting below the singular steady
state $U_*-a$ is global unbounded; see \cite{DGLV}.
A similar phenomenon occurs for $f(u)=e^u$ with $n\ge 11$ and suitable $R>0$.

\smallskip

(iii) The assumption $f\ge 0$ is used in an essential way in our proofs and
we thus do not cover nonlinearities such that $f'(0)<0$ 
(including the so-called bistable nonlinearities).
Related results on the behavior of threshold solutions and their instability properties for such nonlinearities can be found
in \cite{Pol11} 
(see also \cite{Zl06, DM10} for earlier results for one-dimensional problems). 
These works are based on rather different techniques which make essential use of the assumption $f'(0)<0$
and in particular do not apply to the model case $f(u)=u^p$. Also the results in \cite{Pol11} often require Sobolev subcritical growth of $f$ at infinity.

\smallskip

(iv) The strict convexity assumption on $f$ in the second part of Theorem~\ref{thm-subsol2}(i) cannot be replaced by mere convexity.
Indeed, let $\Omega$ be any smooth bounded domain, fix any $\eta>0$, $p>1$, denote by $\lambda_1$ 
the first eigenvalue of $-\Delta$ in $\Omega$ with zero Dirichlet conditions, and by $\varphi_1$ the corresponding eigenfunction
such that $\max_{\overline\Omega}\varphi_1=1$.
It is easily seen that there exist convex functions $f\in C^2([0,\infty))$ such that $f(s)=\lambda_1 s$ for $s\in[0,\eta]$ 
and $f(s)=s^p$ for $s$ large.
Then assumption \eqref{f10} is satisfied, but \eqref{MPf} has infinitely many ordered positive steady states, given by $\mu\varphi_1$ for any $\mu\in(0,\eta]$.

\smallskip

(v) Theorems~\ref{thm-introA} and \ref{thm-subsol2} remains valid for problem \eqref{MPf} with the Neumann boundary conditions $\partial_\nu u=0$
instead of the Dirichlet boundary conditions $u=0$.
This follows from the proof below upon replacing the weighted Lebesgue spaces $L^q_\delta$ by the usual 
 Lebesgue spaces~$L^q$.
\qed
\end{remark}

\smallskip

\goodbreak

 The proof of Theorem~\ref{thm-subsol2} is based on the following three lemmas.
Our first lemma contains a key comparison argument for suitably ordered initial data
(valid for general convex nonlinearities such that $f(0)=0$).

\begin{lemma} \label{lem-subsol}
Assume \eqref{ass-Omegap0} and let $f\in C^1([0,\infty))\cap C^2((0,\infty))$, with $f(0)=0$ and $f$ convex.
Let $u_0, v_0$ satisfy \eqref{ass-u0intro} with
$u_0\ge v_0+\eps f(v_0)$ for some $\eps>0$.
Denote by $u, v\ge 0$ the corresponding solutions of \eqref{MPf}.
Then we have $T(v_0)\ge T(u_0)$ and 
\be{compvu}
u\ge v+\eps f(v) \quad\hbox{in $\Omega\times (0,T(u_0))$.}
\ee
\end{lemma}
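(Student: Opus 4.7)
The plan is to show that $w := v + \eps f(v)$ is a classical subsolution of the equation satisfied by $u$, and then apply the parabolic comparison principle.

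First, I introduce $\phi(s) := s + \eps f(s)$, which is strictly increasing, convex, $C^1$ on $[0,\infty)$ and $C^2$ on $(0,\infty)$, with $\phi(0)=0$. At any point where $v>0$, direct differentiation gives
$$w_t - \Delta w = \phi'(v)(v_t-\Delta v) - \phi''(v)|\nabla v|^2 = (1+\eps f'(v))f(v) - \eps f''(v)|\nabla v|^2 \le (1+\eps f'(v))f(v),$$
using $f''\ge 0$. Then the tangent-line inequality for the convex function $f$, applied at the point $v$ with increment $\eps f(v)$, yields
$$f(w) = f\bigl(v+\eps f(v)\bigr) \ge f(v) + \eps f(v)\, f'(v) = (1+\eps f'(v))f(v),$$
so combining the two bounds gives the pointwise subsolution inequality $w_t-\Delta w \le f(w)$ wherever $v>0$.

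Second, I would check the parabolic boundary data. At $t=0$, the hypothesis gives $w(\cdot,0) = v_0 + \eps f(v_0) \le u_0 = u(\cdot,0)$. When $\Omega$ is bounded, $v=0$ on $\partial\Omega$ yields $w = \phi(0)=0 \le u$ there. When $\Omega=\R^n$, no lateral boundary is present and I would rely on the fact that $u$, $v$, hence $w$, remain bounded on every compact subinterval of $[0,\min(T(u_0),T(v_0)))$, so the standard comparison principle for bounded sub/supersolutions on $\R^n$ applies. To make the interior computation fully rigorous despite $\phi''$ being guaranteed only on $(0,\infty)$, I would invoke the strong maximum principle: either $v\equiv 0$, in which case $u\ge 0 = v+\eps f(v)$ is trivial, or $v>0$ in the parabolic interior, where the subsolution inequality above holds pointwise. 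The comparison principle (applied to $u-w$, which satisfies a linear parabolic inequality with coefficient controlled by the local Lipschitz constant of $f$ on the range of $u$ and $w$) then yields \eqref{compvu} on $\Omega\times(0,T(u_0))$.

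Finally, $T(v_0)\ge T(u_0)$ follows from $v\le u$ and the blow-up alternative: if $T(v_0)<T(u_0)$, then $\|v(\cdot,t)\|_\infty\to\infty$ as $t\to T(v_0)^-$, contradicting $v\le u$ together with the boundedness of $u$ on $[0,T(v_0)]$. The main (mild) obstacle is the possible singularity of $f''$ at $0$, but since the term $-\phi''(v)|\nabla v|^2$ has the right sign and is simply discarded, no upper bound on $f''$ near $0$ is required; the strong maximum principle removes the only other potentially problematic set, namely $\{v=0\}$ in the interior.
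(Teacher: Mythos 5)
Your subsolution argument is essentially the paper's: you show $w=v+\eps f(v)$ satisfies $w_t-\Delta w\le f(w)$ using $f''\ge0$ together with the tangent-line (Taylor) inequality, reduce the degeneracy at $\{v=0\}$ to the trivial case $v\equiv0$ via the strong maximum principle, check the parabolic boundary data (using $f(0)=0$ on $\partial\Omega$), and conclude \eqref{compvu} by the comparison principle on $(0,\min(T(u_0),T(v_0)))$. Up to that point the proof is fine and matches the paper step for step.

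The gap is in your last paragraph. You deduce $T(v_0)\ge T(u_0)$ from ``$v\le u$'', but the lemma does not assume $f\ge0$ (only $f(0)=0$, $f$ convex, $C^1\cap C^2$), and the estimate you actually proved is $u\ge v+\eps f(v)$, which dominates $v$ only where $f(v)\ge0$; indeed even the hypothesis $u_0\ge v_0+\eps f(v_0)$ does not give $v_0\le u_0$ if $f(v_0)<0$, so comparison does not yield $v\le u$ either. For a sign-changing convex $f$ with $f(0)=0$ your blow-up-alternative step therefore does not go through as written. The paper closes this by observing that convexity plus $f(0)=0$ forces a dichotomy: either $f\le0$ on $[0,\infty)$, in which case both (nonnegative) solutions are global and there is nothing to prove, or $f$ is bounded below, say $f\ge-K$, and then \eqref{compvu} gives $v\le u-\eps f(v)\le u+\eps K$ on the common existence interval, which prevents $v$ from blowing up before $u$. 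With that one-line repair (or under the additional assumption $f\ge0$, as in \eqref{f10}, where your $v\le u$ is correct) your argument is complete. A minor side remark: your claim that $\phi=Id+\eps f$ is strictly increasing also uses a sign condition on $f'$ that is not available in this generality, but nothing in your proof actually relies on it.
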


\begin{proof}
We may assume without loss of generality that $v_0\not\equiv 0$
(since otherwise $v\equiv 0$ and the conclusion is trivial).
We thus have $v>0$ in $\Omega\times(0,T(v_0))$ by the strong maximum principle.
We compute 
$$\partial_t(f(v))=f'(v)v_t,\quad \Delta(f(v))=f'(v)\Delta v+f''(v)|\nabla v|^2.$$
Since $f''\ge 0$ on $(0,\infty)$, the function $z:=v+\eps f(v)$ satisfies
$$z_t-\Delta z\le (1+\eps f'(v))(v_t-\Delta v)=(1+\eps f'(v))f(v)$$
and, by Taylor's formula, we have
$$f\bigl(v+\eps f(v)\bigr)=f(v)+f'(v)\eps f(v)+\frac{f''(\theta v)}{2}(\eps f(v))^2\ge f(v)+f'(v)\eps f(v)=(1+\eps f'(v))f(v),$$
for some function $\theta=\theta(x,t)\in [0,1]$.
Consequently,
$$z_t-\Delta z\le f(z) \quad\hbox{in $\Omega\times (0,T(v_0))$.}$$
Let $T:=\min(T(u_0),T(v_0))$.
If $\Omega$ is bounded, we moreover have $u=z=0$ on $\partial\Omega\times(0,T)$,
owing to $f(0)=0$.
Since $u_0\ge z(\cdot,0)$ in $\Omega$ by assumption, 
it follows from the comparison principle (see, e.g., \cite[Proposition 52.6 or 52.10]{QS19}, 
which apply for $\Omega=\R^n$ as well as $\Omega$ bounded) that
$u\ge z$ in $\Omega\times (0,T)$, i.e.~\eqref{compvu} holds with $T(u_0)$ replaced by $T$.
Finally, since $f$ is convex and $f(0)=0$, we either have $f\le 0$ on $[0,\infty)$ or $f$ bounded below. 
In the first case, we immediately get $T(u_0)=T(v_0)=\infty$. In the second case, setting $K:=\sup_{[0,\infty)}(-f)\in [0,\infty)$, we obtain
$v\le u-\eps f(v)\le u+\eps K$ in $\Omega\times (0,T)$, hence $T(v_0)\ge T(u_0)$.
The proof of the lemma is complete.
\end{proof}

The second lemma provides a (universal) a priori estimate 
in $L^1_\delta$ or $L^1_{ul}$ spaces for global solutions of \eqref{MPf}. 
We first recall the definition of the spaces $L^q_\delta$ or $L^q_{ul}$ for $q\in[1,\infty]$:
$$L^q_\delta(\Omega)=L^q(\Omega,\delta(x)dx),$$
where $\delta(x)={\rm dist}(x,\partial\Omega)$ is the distance to the boundary, and
$$L^q_{ul}=L^q_{ul}(\R^n)=\bigl\{w\in L^q_{loc}(\R^n);\ \sup_{a\in\R^n}\|w\|_{L^q(B_1(a))}<\infty\bigr\},$$
both equipped with their natural norms.
We note that $L^\infty_\delta(\Omega)=L^\infty(\Omega)$, with same norms, as well as $L^\infty_{ul}(\R^n)=L^\infty(\R^n)$.
In the rest of this section, for convenience, we will denote
$$
X_q=
\begin{cases}
L^q_\delta(\Omega)& \hbox{ if $\Omega$ is bounded,} \\ 
\noalign{\vskip 1mm}
L^q_{ul} & \hbox{ if $\Omega=\R^n$.} 
\end{cases}$$

\begin{lemma} \label{lem-UB}
Assume \eqref{ass-u0intro}, \eqref{f10} and $T(u_0)=\infty$.
Then
$$\sup_{t\ge 0} \|u(t)\|_{X_1} \le M,$$
for some constant $M=M(f,\Omega)>0$.
\end{lemma}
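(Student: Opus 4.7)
The plan is to apply the classical Kaplan-type first-eigenfunction argument, suitably localized in the unbounded case. Let $\varphi \ge 0$ be the first Dirichlet eigenfunction of $-\Delta$ on a domain $D$, with eigenvalue $\lambda$, normalized so that $\int_D \varphi\,dx = 1$. In the bounded case I take $D = \Omega$; in the case $\Omega=\R^n$, I fix $R>1$ (say $R=2$) and for each $a\in\R^n$ take $D=B_R(a)$, writing $\varphi = \varphi_a(\cdot) = \varphi_1(\cdot-a)$, so that $\lambda = \lambda_1(B_R(0))$ is independent of $a$.

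Setting $y(t) := \int_D u(x,t)\varphi(x)\,dx$, I multiply the PDE by $\varphi$ and integrate by parts. Using $\varphi=0$ on $\partial D$ together with either $u=0$ on $\partial D$ (bounded case) or $u\ge 0$ combined with $\partial_\nu\varphi\le 0$ by Hopf ($\R^n$ case), the boundary contributions have a favorable sign and give
\[
y'(t) \ge -\lambda\,y(t) + \int_D f(u)\,\varphi\,dx.
\]
Since $\varphi\,dx$ is a probability measure on $D$ and $f$ is convex, Jensen's inequality yields $\int_D f(u)\varphi\,dx \ge f(y(t))$, hence
\[
y'(t) \ge g(y(t)), \qquad g(s):= f(s)-\lambda s.
\]

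Convexity together with $f(0)=0$ makes $s\mapsto f(s)/s$ nondecreasing, and the condition $\int^\infty ds/f(s)<\infty$ forces $f(s)/s\to\infty$. Hence there exists $M_0=M_0(f,\lambda)>0$ such that $g(s)\ge f(s)/2$ for $s\ge M_0$, with $\int_{M_0}^\infty ds/g(s) \le 2\int_{M_0}^\infty ds/f(s)<\infty$. If $y(\tau)>M_0$ for some $\tau\ge 0$, then $y'(\tau)>0$ and the set $\{t\ge\tau:y(t)>M_0\}$ is forward-invariant, so separating variables in $y'\ge g(y)$ gives
\[
t-\tau \le \int_{y(\tau)}^{y(t)}\frac{du}{g(u)} \le 2\int_{M_0}^\infty\frac{du}{f(u)}<\infty,
\]
forcing $y$ to blow up in finite time. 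This contradicts the fact that $y(t)$ is finite at every finite $t$, which holds since $T(u_0)=\infty$ means $\|u(\cdot,t)\|_\infty<\infty$ at each such $t$. Therefore $y(t)\le M_0$ for all $t\ge 0$, with $M_0$ depending only on $f$ and $\lambda$, i.e.\ on $f$ and $\Omega$.

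Finally, I convert this weighted bound into the stated norm. In the bounded smooth case, the Hopf lemma gives $c_1=c_1(\Omega)>0$ with $\varphi(x)\ge c_1\delta(x)$, so $\|u(\cdot,t)\|_{L^1_\delta}\le M_0/c_1$. In the $\R^n$ case, $\varphi_1 \ge c_R>0$ on $\overline{B_1(0)}\subset B_R(0)$, so $\|u(\cdot,t)\|_{L^1(B_1(a))}\le M_0/c_R$ uniformly in $a$, which is the $L^1_{ul}$ estimate. The only genuinely delicate point is the ODE blow-up argument; in particular one must remember that $y(t)$ being a priori finite at every finite $t$ (guaranteed by $T(u_0)=\infty$) is exactly what makes the derived blow-up of $y$ a contradiction rather than a mere coincidence with that of $u$.
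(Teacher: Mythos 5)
Your proposal is correct and follows essentially the same route as the paper: Kaplan's eigenfunction method with Jensen's inequality, leading to the differential inequality $y' \ge f(y) - \lambda y$, followed by an ODE blow-up comparison using $\int^\infty ds/f(s) < \infty$, with the translation of $u$ (equivalently, translating the ball) handling the $\R^n$ case and Hopf's lemma converting the weighted bound to $L^1_\delta$. The only cosmetic difference is that the paper packages the superlinearity as $\frac12 f(s) \ge \lambda_1 s - C_1$ for all $s \ge 0$, while you absorb the linear term only for $s \ge M_0$; both give the same conclusion.
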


The a priori bounds in Lemma~\ref{lem-UB}, based on Kaplan's eigenfunction method, are essentially known.
For the case $f(s)=s^p$, see \cite[p.~103]{NST} or \cite[p.~109]{FSW} when $\Omega$ bounded, 
and \cite[Lemma~26.16]{QS19} (see also \cite[Lemma~4.4]{MaSou}) when $\Omega=\R^n$.
We give the short proof for general $f$ for convenience.

\begin{proof}
In this proof, the symbols $C_i$ denote positive constants depending only on $f,\Omega$.
\smallskip

First consider the case when $\Omega$ is bounded.
Let $\varphi_1$ be the first positive eigenfunction of $-\Delta$ in $\Omega$ with zero Dirichlet conditions, 
normalized by $\int_\Omega\varphi_1=1$, and denote by $\lambda_1$ the corresponding eigenvalue.
Set $y(t)=\int_\Omega u(t)\varphi_1$.
By \eqref{f10}, we have $\lim_{s\to\infty} f(s)/s=\infty$, hence $\frac12 f(s)\ge \lambda_1s-C_1$ for all $s\ge 0$.
Multiplying the PDE in \eqref{MPf}  by $\varphi_1$, integrating by parts and using Jensen's inequality, it follows that
\be{diffineqy}
y'(t)\ge \int_\Omega f(u(t))\varphi_1 
-\lambda_1 \int_\Omega u(t)\varphi_1
\ge f(y(t))-\lambda_1 y(t)
 \ge  \frac12 f(y(t))-C_1.
 \ee
Since $u$ is global, $\lim_{s\to\infty} f(s)=\infty$, and $\int^\infty\frac{ds}{f(s)}<\infty$, 
we deduce easily that $y(t)\le C_2$ for all $t\ge 0$.
Since $\varphi_1\ge C_3\delta$ by Hopf's lemma, assertion (i) follows.

\smallskip

Now consider the case when $\Omega=\R^n$.
Let $\varphi_1, \lambda_1, y(t)$ be as in the previous paragraph
with $\Omega$ replaced by $B_2$.
Arguing as above, also using $\partial_\nu\varphi_1\leq 0$ on $\partial B_2$, we again obtain \eqref{diffineqy},
hence $y(t)\le C_4$ for all $t\ge 0$.
Consequently, 
$$\int_{B_1} u(t)\,dx\leq C(n)\int_{B_2} u(t)\varphi_1\,dx\le C_5,\quad t\ge 0.$$
The conclusion then follows by applying this to $u(x-a,t)$ and taking 
the supremum over $a\in\R^n$.
 \end{proof}

Our third lemma is a 
smoothing estimate in $L^q_\delta$ or $L^q_{ul}$ spaces for the following 
{\it sublinear},\footnote{We point out that similar results for the, 
more familiar, linear problem with $u$ instead of $|u|^\theta$ would not be sufficient for our needs,
in order to apply to arbitrary nonlinearities satisying \eqref{f10}.}
auxiliary parabolic equation with a potential:
\begin{equation} \label{MPflin}
\begin{aligned}
v_t-\Delta v &= W(x,t)|v|^\theta&\quad&\hbox{in }\ \Omega\times(0,T), \\
v &=0 &\quad&\hbox{on }\partial\Omega\times(0,T), 
\end{aligned}
\end{equation}
with $\theta\in [0,1)$.

\begin{lemma} \label{lem-smoothing}
Set $m_0=(n+1)/2$ if $\Omega$ is bounded and $m_0=n/2$ if $\Omega=\R^n$.
Let $T\in(0,\infty]$, $m\in[1,\infty)$ with $m>m_0$, $M>0$, $\tau\in(0,1]$ 
and $W\in L^\infty(0,T;X_m)$.
There exists $M_1>0$, depending only on $M,\tau,m,\Omega$ (independent of $T$) such that,
if $v\in L^\infty(0,T;L^\infty(\Omega))$ is a strong (or more generally, mild)
solution of \eqref{MPflin} with $\theta=1-\frac{1}{m}$ and
$$\|v\|_{L^\infty(0,T;X_1)} + \|W\|_{L^\infty(0,T;X_m)}\le M,$$
then
\be{condpq3}
\|v\|_{L^\infty(\tau,T;L^\infty(\Omega))} \le M_1.
\ee
\end{lemma}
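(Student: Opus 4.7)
The plan is to prove the lemma by a parabolic bootstrap argument based on Duhamel's formula
$$v(t)=e^{(t-t_k)\Delta}v(t_k)+\int_{t_k}^t e^{(t-s)\Delta}\bigl[W(s)|v(s)|^\theta\bigr]\,ds,$$
combined with the $X_p\to X_q$ smoothing estimate
$$\|e^{t\Delta}h\|_{X_q}\le C\, t^{-\frac{d}{2}(1/p-1/q)}\|h\|_{X_p},\qquad 1\le p\le q\le\infty,\ t\in(0,1],$$
where $d=n+1$ in the bounded case (the extra ``$+1$'' coming from the weight $\delta$ in the standard Dirichlet heat kernel estimate) and $d=n$ for $\Omega=\R^n$, together with H\"older's inequality
$$\|W\,|v|^\theta\|_{X_r}\le \|W\|_{X_m}\,\|v\|_{X_q}^\theta\qquad\text{when }\frac{1}{r}=\frac{1}{m}+\frac{\theta}{q}.$$
For time increments $t-t_k>1$, the tail piece in Duhamel is controlled by the $X_1$ bound, using either the exponential $L^1_\delta\to L^\infty$ decay of the Dirichlet semigroup on bounded $\Omega$ or a decomposition into unit balls on $\R^n$.

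The crux is that, with $\theta=1-1/m$, one step of Duhamel $+$ H\"older $+$ smoothing converts a uniform bound in $X_q$ into a uniform bound in $X_{q'}$, where $q'$ can be chosen so that, writing $\sigma:=1/q$,
$$\sigma':=\frac{1}{q'}=\frac{1}{m}+\theta\,\sigma-\frac{2}{d}+\eta$$
for an arbitrarily small $\eta>0$ (introduced to keep the corresponding singular exponent $\frac{d}{2}(1/r-1/q')$ strictly below $1$, so that the Duhamel integral converges). This affine recursion on $\sigma$ has fixed point
$$\sigma^*=\frac{1}{1-\theta}\Bigl(\frac{1}{m}-\frac{2}{d}\Bigr)=1-\frac{2m}{d},$$
which is strictly negative precisely when $m>d/2=m_0$. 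Hence, starting from $\sigma_0=1$ (given by the $X_1$ bound on $v$), the iterates $\sigma_k$ strictly decrease and cross $0$ after a finite number of steps $K=K(m,n)$, at which stage we can take $q_{k+1}=\infty$ and land in $L^\infty$.

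To execute the argument concretely, I would partition $(0,\tau]$ into $K+1$ subintervals of length $\tau/(K+1)\le 1$ and propagate successive bounds $A_k$ on $\|v\|_{L^\infty((t_k,T);\,X_{q_k})}$ by applying the above Duhamel estimate on each subinterval; the bounds $A_k$, the exponents $q_k$, and the number of steps $K$ depend only on $M$, $\tau$, $m$, $\Omega$, and crucially not on $T$. The main obstacle, and the place where the hypothesis $m>m_0$ enters in an essential way, is guaranteeing that the iteration actually reaches $L^\infty$ in finitely many steps with constants uniform in $T$: this is exactly the content of the fixed-point computation $\sigma^*<0$. After $K$ iterations, we obtain \eqref{condpq3} with $M_1$ of the claimed form.
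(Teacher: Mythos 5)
Your proof follows essentially the same route as the paper's: Duhamel's formula with a small fixed time step, H\"older's inequality to place $W|v|^{1-1/m}$ in some $X_r$, and the $X_p\to X_q$ smoothing estimate with singularity exponent $m_0(1/r-1/q)<1$, iterated finitely many times on a sliding time window to climb from $X_1$ to $L^\infty$ with constants independent of $T$. The only cosmetic difference is that you extract the threshold $m>m_0$ from the negative fixed point $\sigma^*=1-2m/d$ of the affine recursion on $\sigma=1/q$, whereas the paper simply chooses the arithmetic progression $1/p_i=1-i\eta$ with a fixed increment $\eta<\tfrac1{m_0}-\tfrac1m$; both yield a finite number of steps depending only on $m$ and $n$, and the tail estimate you mention for large $t-t_k$ is not actually needed once the time step is taken $\le 1$, as the paper does.
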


\begin{proof}
We may assume $T<\infty$.
Denote by $e^{t\Delta}$ the heat semigroup on $L^\infty(\Omega)\subset X_p$, 
with Dirichlet conditions in the case $\Omega$ bounded.
We know that, for all $1\le p\le q\le \infty$,
\be{XpXq}
\|e^{t\Delta}\|_{\mathcal{L}(X_p,X_q)}\le C(\Omega) t^{-\gamma},\quad t>0,\quad\hbox{where }  
\gamma=m_0\Bigl(\frac{1}{p}-\frac{1}{q}\Bigr)
\ee
(see \cite{FSW} for $\Omega$ bounded and \cite{GV97} for $\Omega=\R^n$).
Let $\tau\in (0,1]$ with $\tau<T$ and let $t\in(0,T-\tau)$. Since $v$ is a (mild) solution of \eqref{MPflin}, it satisfies
\be{varconstu}
v(t+\tau)=e^{\tau\Delta}v(t)+\int_0^\tau e^{(\tau-s)\Delta}h(s)\, ds,
\quad h:=W|v|^{1-\frac{1}{m}}.
\ee
Let $1\le p<\infty$, let $q\in(p,\infty]$ satisfy
\be{condpq}
\frac{1}{p}-\frac{1}{q}<\frac{1}{m_0}-\frac{1}{m}
\ee
and set
\be{condpqr}
r=\frac{mp}{m+p-1}\in [1,\min(m,p)].
\ee
For $m>1$, we have $r<m$ and $\frac{r(m-1)}{m}(\frac{m}{r})'=\frac{r(m-1)}{m-r}=p$ hence,
by H\"older's inequality,
$$\bigl\||h|^r\bigr\|_{X_1}\le \bigl\||W|^r\bigr\|_{X_{m/r}}\bigl\||v|^{r(m-1)/m}\bigr\|_{X_{(m/r)'}}
= \|W\|^r_{X_m} \|v\|^{r(m-1)/m}_{X_p}.$$
Consequently (treating the case $m=1=r$ separately), we obtain
\be{condpqh}
\|h(s)\|_{X_r}\le M\|v(s)\|_{X_p}^{(m-1)/m},\quad s\in(0,T).
\ee
Also, \eqref{condpq}, \eqref{condpqr} guarantee that
$$\nu:=m_0\Bigl(\frac{1}{r}-\frac{1}{q}\Bigr)
<m_0\Bigl(\frac{1}{p}+\frac{1}{m}-\frac{1}{q}\Bigr)<1.$$
Combining \eqref{XpXq}, \eqref{varconstu}, \eqref{condpqh} we obtain 
$$\begin{aligned}
\|v(t+\tau)\|_{X_q}
&\le C\tau^{-\gamma}\|v(t)\|_{X_p}+C\int_0^\tau (\tau-s)^{-\nu}\|h(s)\|_{X_r}\, ds\\
&\le C\tau^{-\gamma}\|v(t)\|_{X_p}+C\tau^{1-\nu}M\sup_{s\in(t,t+\tau)}\|v(s)\|^{1-\frac{1}{m}}_{X_p},
\end{aligned} $$
where $C>0$ is a generic constant depending only on $\Omega,p,q,m$.
Setting $N_p(a,b)=\sup_{s\in (a,b)}\|v(s)\|_{X_p}$ and taking supremum for $t\in(0,T-\tau)$, we get
\be{condpq2}
N_q(\tau,T)\le C\tau^{-\gamma}N_p(0,T)+CMN^{1-\frac{1}{m}}_p(0,T),\quad 0<\tau\le 1, \ \tau<T.
\ee
Now select $\eta\in\bigl(0,\min\bigl(1,\frac{1}{m_0}-\frac{1}{m}\bigr)\bigr)$ such that $1/\eta$ is noninteger,
put $N=1+[1/\eta]\ge 2$ and define $p_i=(1-i\eta)^{-1}$ for $i=0,\cdots,N-1$ and $p_N=\infty$.
For $i=1,\cdots,N$, since $p=p_{i-1}$ and $q=p_i$ satisfy \eqref{condpq}, we may apply \eqref{condpq2}
with $\tau$ replaced by $\tau/N$, which iteratively yields an estimate of 
$N_{p_i}(i\tau/N,T)$ in terms of $M$, hence \eqref{condpq3} for $i=N$.
 \end{proof}

We are now in a position to prove Theorem~\ref{thm-subsol2}.

\begin{proof}[Proof of Theorem~\ref{thm-subsol2}]
We split the proof into several steps for clarity.

\smallskip

{\bf Step 1.} {\it Preliminaries.} We first consider the case when $u_0, v_0\in L^\infty(\Omega)$ 
satisfy $u_0\ge v_0\ge 0$ and $u_0\ge v_0+\eps f(v_0)$.
We shall show that $v$ is bounded.
The idea is to use Lemma~\ref{lem-subsol} iteratively to show that arbitrary powers of $v$ are dominated by $u$,
hence are bounded in $L^1_\delta$ or $L^1_{ul}$ by Lemma~\ref{lem-UB},
and to conclude by means of Lemma~\ref{lem-smoothing}.
In order to carry out the iteration, we introduce the map 
$$\phi=Id+\eta f: [0,\infty)\to[0,\infty)$$
for suitable $\eta>0$ and will consider ``intermediate'' initial data
of the form $w_{i,0}=\phi^{(i)}(v_0)$ for $i\ge 1$,
where $\phi^{(i)}$ denotes the $i$-th iterate of $\phi$.
Namely, fixing an integer $k\ge 2$, we set 
\be{compuv0}
M_0=\|v_0\|_\infty,\quad 
L=\sup_{0\le s\le 2M_0} f'(s) \in [0,\infty), 
\quad \eta=\frac{1}{2k-1}\min(\eps,L^{-1}).
\ee
We also note that $f$ is nondecreasing in view of our assumptions.

\smallskip

{\bf Step 2.} {\it Comparison between $\phi^{(i)}$ and $f$.} 
We claim that
\be{compuv1}
\phi^{(i)}(s)\le s+(2i-1)\eta f(s),\quad 0\le s\le M_0,\quad i=1,\dots,k.
\ee
Assume that \eqref{compuv1} is true for a given $i\in \{1,\dots,k-1\}$ and let $s\in [0,M_0]$.
By \eqref{compuv0}, we have $f(s)\le Ls$, hence
$s+(2i-1)\eta f(s)\le (1+(2i-1)L\eta)M_0\le 2M_0$.
Since $\phi$ is nondecreasing, we deduce from the mean-value inequality that
$$f(\phi^{(i)}(s))\le f(s+(2i-1)\eta f(s)) 
\le f(s)+L\cdot (2i-1)\eta f(s)\le 2f(s),$$
hence
$$\phi^{{(i+1)}}(s)=\phi^{(i)}(s)+\eta f(\phi^{(i)}(s))\le s+(2i-1)\eta f(s)+2\eta f(s)=s+(2i+1)\eta f(s).$$
Since \eqref{compuv1} is true for $i=1$, the claim follows by induction.

\smallskip

{\bf Step 3.} {\it Comparison between $u$ and iterates of $v$.} 
We claim that
\be{compuv}
 \phi^{(k)}(v)\le u  \quad\hbox{in $\Omega\times(0,\infty)$}.
\ee
Denote by $w_i$ the solution of \eqref{MPf} with initial data 
$w_{i,0}=\phi^{(i)}(v_0)$ for $i=0,\dots,k$, hence $w_0=v$.
For $i\in \{0,\dots,k\}$, by  \eqref{compuv0}, \eqref{compuv1}, we have
$w_{i,0}=\phi^{(i)}(v_0)\le v_0+\eps f(v_0)\le u_0$.
It follows from the comparison principle that $w_i$ is global and
\be{compuv2}
w_i\le u \quad\hbox{in $\Omega\times(0,\infty)$},\quad i=0,\dots,k.
\ee
On the other hand, for $i\in \{0,\dots,k-1\}$, since $w_{i+1,0}=\phi(w_{i,0})$, we deduce from Lemma~\ref{lem-subsol} that
$w_{i+1}\ge \phi(w_i)$ in $\Omega\times(0,\infty)$, hence $w_k\ge \phi^{(k)}(v)$.
This combined with \eqref{compuv2} for $i=k$ gives \eqref{compuv}.

\smallskip

{\bf Step 4.} {\it Uniform boundedness of $v$.} 
We claim that, for each integer $k\ge 1$, there exists a constant $C_k>0$,
depending only on $k, f, \Omega, M_0, \eps$, such that
\be{itergi}
\frac{f^k(s)}{s^{k-1}}\le C_k\bigl(1+\phi^{(k)}(s)\bigr),\quad s>0.
\ee
Property \eqref{itergi} for $k=1$ is true with $C_1=1/\eta$. 
Assume it is true for some $k\ge 1$. 
Set $g(s)=\frac{f(s)}{s}$.
Since, by assumption \eqref{f10}, $g$ is nondecreasing and $\lim_{s\to\infty} g(s)=\infty$, 
it follows that, for all $s\ge1$, 
we have
$\phi^{(k)}(s)\ge s$ and $s g^k(s)\le 2C_k\phi^{(k)}(s)$.
Then, for all $s\ge1$, 
we obtain
$$\phi^{(k+1)}(s)\ge \eta f(\phi^{(k)}(s))=\eta \phi^{(k)}(s)g(\phi^{(k)}(s))
\ge \eta \phi^{(k)}(s)g(s) \ge \frac{\eta}{2C_k} s g^{k+1}(s),$$  
which guarantees \eqref{itergi} for $k+1$, hence for all $k\ge 1$ by induction.

Now, by \eqref{compuv} and \eqref{itergi}, it follows that, for all $k\ge 1$,
\be{compuv3}
\frac{f^k(v)}{v^{k-1}}\le C_k(u+1) \quad\hbox{in $\Omega\times(0,\infty)$}.
\ee
On the other hand, by Lemma~\ref{lem-UB},
the global solution $u$ is bounded in $X_1$, with a bound depending only on $f, \Omega$.
Choosing $k$ to be the smallest integer $k>(n+1)/2$, 
it follows from \eqref{compuv3} that 
the assumptions of Lemma~\ref{lem-smoothing} with $W(x,t):=|v|^{\frac{1}{k}-1}f(v)$ 
are satisfied for 
$m=k$ and $M$ depending only on $f, \Omega, M_0, \eps$.
Consequently, by Lemma~\ref{lem-smoothing}, for each $\tau>0$, there exists $M_2>0$, depending only on $f, \Omega, M_0, \eps, \tau$, such that
$$ \sup_{t\ge \tau}\|v(t)\|_{L^\infty(\Omega)} \le  M_2.$$
Since, by standard local theory, $\|v_0\|_\infty\le M_0$ implies $\|v(t)\|_\infty\le \|v_0\|_\infty+1$ for $t\in(0,t_0]$ with $t_0=t_0(f,\Omega,M_0)>0$,
this yields \eqref{AEepsf}.
  
\smallskip

{\bf Step 5.} {\it Conclusion.}  
We observe that what we just proved implies
the boundedness of $v$ in both assertions (i) and (ii),
as well as (iii).
Indeed, assume $\Omega$ bounded or $\Omega=\R^n$,
 and $u_0\ge \lambda v_0$ for some $\lambda>1$. Then, since $z_0:=f(v_0)/v_0\in L^\infty(\Omega)$ 
owing to $f(0)=0$ and $f\in C^1([0,\infty))$,
we have  $u_0\ge v_0+\eps f(v_0)$ with $\eps=(\lambda-1)(1+\|z_0\|_\infty)^{-1}$.
Moreover, if $\Omega$ is bounded and $u_0\ge v_0$, $u_0\not\equiv v_0$, then, since $f(s)/s$ is 
nondecreasing owing to $f(0)=0$ and $f$ convex, it follows from the proof of \cite[Lemma 17.9]{QS19} that
$u(\cdot,\tau)\ge \lambda v(\cdot,\tau)$ for some $\tau>0$ and some $\lambda>1$,
and we are reduced to the previous case after a time shift.

Finally assume $\Omega$ is bounded and $f$ strictly convex.
Set $w_0:=(u_0+v_0)/2$ and let $w$ denote the solution with initial data $w_0$.
It follows from assertion (iii) that $v$ and $w$ are global and bounded.
Moreover, as above, we have $w(\cdot,\tau)\ge \lambda v(\cdot,\tau)$ for some $\tau>0$ and some $\lambda>1$.
Since $\frac{f(s)}{s}$ is nondecreasing by assumption \eqref{f10}, it follows that $\lambda v$ is a subsolution, hence 
\be{compvw}
w\ge \lambda v\quad\hbox{in $\Omega\times(\tau,\infty)$}.
\ee
Assume for contradiction that the property $\lim_{t\to\infty}\|v(\cdot,t)\|_\infty=0$ fails.
Hence there exist $\delta>0$ and a sequence $t_j\to\infty$ such that
\be{compvw2}
\|v(\cdot,t_j)\|_\infty\ge \delta.
\ee
By standard dynamical systems arguments (see, e.g.,~\cite[Proposition~53.6 and Example~53.7]{QS19}),
there exist nonnegative steady states $V, W$ and a subsequence $t'_j$ such that, as $j\to\infty$, $v(t_j)$ and $w(t_j)$ 
converge uniformly to $V$ and $W$, respectively.
We then have $V\not\equiv 0$ by \eqref{compvw2}, hence $V>0$ in $\Omega$, and $W\ge\lambda V$ by \eqref{compvw}.
But, since $f$ is strictly convex, this contradicts the nonexistence of multiple 
ordered positive steady states (cf.~\cite[Proposition~19.8]{QS19}).
The proof is complete. 
\end{proof}

\begin{proof}[Proof of Corollary~\ref{cor-nonrad}(i)-(ii)]
In the case $\Omega$ bounded,
the assertions 
follow from Theorem~\ref{thm-subsol2}. 

Thus consider the case $\Omega=\R^n$. If $u_0$ is radial, radially nonincreasing
and $f(u_0)\in L^1$, then Theorem~\ref{thm-subsol2} implies that
the solution $w$ with initial data $w_0=u_0+\eps f(u_0)$
blows up in finite time for any $\eps>0$,
hence $u_0$ is a threshold due to
the proof of \cite[Theorem~2]{Q17}
(which remains true for convex functions $f$).
Consequently, assertion (i) is true.

Now take $v_0\le u_0$, $v_0\not\equiv u_0$,
and assume to the contrary that $v$ is unbounded.
Then $u(\cdot,1)$ is radially decreasing and $v(\cdot,1)<u(\cdot,1)$,
hence we may find $w_1$ radial and radially nonincreasing such that 
$v(\cdot,1)\le w_1\le u(\cdot,1)$, $w_1\not\equiv u(\cdot,1)$.
Since $w$ is global unbounded, assertion (i) implies that $u$ blows up in finite time,
a contradiction.
\end{proof}

The proof of Corollary~\ref{cor-nonrad}(iii) is postponed to Section~\ref{sec-3} after the proof of Theorem~\ref{thm-nonrad-3},
since it uses some notation and arguments from the latter.

\begin{proof}[Proof of Theorems~\ref{thm-introA} and~\ref{thm-intro}]
(i) Let $u^*$ be an $\eps$-threshold with initial data $u_0^*$
and $u$ be an $\eps f$-subthreshold with initial data $u_0$.
Then there exists $\eps>0$ such that $u_0+\eps f(u_0)\le u_0^*$.
Choose $\lambda_j\in(0,1)$ such that $\lambda_j\nearrow 1$
and let $u_j^*$ and $u_j$ be the (global) solutions with initial data
$\lambda_j u_0^*$ and $\lambda_j u_0$.
Since $f(s)/s$ is nondecreasing, we have
$$ \lambda_j u_0+\eps f(\lambda_j u_0)\le \lambda_j(u_0+\eps f(u_0))\le\lambda_j u_0^*.$$
By Theorem~\ref{thm-subsol2}(iii), $u_j$ satisfies an $L^\infty$-bound, which depends on $\eps$ but not on $j$.
Since $u_j\nearrow u$, the solution $u$ is bounded.

Next assume that $\Omega$ is bounded.
Then $u^*$ is a threshold.
Let $u$ be a subthreshold with initial data $u_0$.
Fix $\delta>0$ small and consider $\tilde u_0^*:=u^*(\cdot,\delta)$ and $\tilde u_0:=u(\cdot,\delta)$
instead of $u_0^*$ and $u_0$, respectively.
The solution $\tilde u^*(t):=u^*(t+\delta)$ is a threshold: 
In fact, if, for example, $v_0\ge \tilde u_0^*$ and $v_0\not\equiv \tilde u_0^*$,
then the corresponding solution $v$ satisfies $v(\cdot,\delta)\ge(1+\eps)\tilde u^*(\cdot,\delta)=(1+\eps)u^*(\cdot,2\delta)$ for some $\eps$,
and we can find $w_0\ge u_0^*$, $w_0\not\equiv u_0^*$, such that the corresponding solution $w$
satisfies $w(\cdot,2\delta)\le(1+\eps)u^*(\cdot,2\delta)$.
Since $w$ blows up in finite time and $v(\cdot,t)\ge w(\cdot,t+\delta)$ for $t\ge\delta$, 
$v$ has to blow up in finite time too.
We can also find $\eps'>0$ such that $(1+\eps')\tilde u_0\le\tilde u_0^*$,
so that the first part of the proof shows that $\tilde u$ (hence also $u$) is bounded.

Now assume that $\Omega$ is bounded and $f$ is strictly convex.
The solution $w$ with initial data $w_0=(u_0^*+u_0)/2$ is a subthreshold, hence it is global. 
The decay of $u$ then follows from Theorem~\ref{thm-subsol2}(i).

\smallskip

(ii) This follows from Corollary~\ref{cor-nonrad} and Theorem~\ref{thm-subsol2}(ii).

\smallskip

(iii) Assume that  
$\Omega=\R^n$, $u^*$ is an $\eps f$-threshold with initial data $u_0^*$, \eqref{assL1} is true,
and $u$ is a subthreshold with initial data $u_0$. 
Then the arguments in the proof of \cite[Theorem~4]{Q17} guarantee that $u^*$ is a threshold.
In fact, we will use similar arguments to show that $u$ is bounded.
Choose $\delta>0$ small. Then $u(\cdot,\delta)<u^*(\cdot,\delta)$ and $u^*(\cdot,\delta)$ is
radially decreasing, hence there exists $\nu>0$ small such that
$w_0(x):=\min(u^*(x,\delta),u^*(0,\delta)-\eta)$ satisfies $u(\cdot,\delta)\le w_0$.
Set $\eta:=\|u^*(\cdot,\delta)-w_0\|_1>0$.
If $\eps$ is small enough and $v_0=u_0-\eps f(u_0)$, then the corresponding $v$ is a radially
nonincreasing $\eps f$-subsolution
(so that $v$ stays bounded by the first part of the proof) and $\|u^*(\cdot,\delta)-v(\cdot,\delta)\|_1<\eta$.
Consequently, $\int_{\R^n}(v(\cdot,\delta)-w_0)\,dx>0$. In addition we have $z(v(\cdot,\delta)-w_0)=1$,
where $z(\cdot)$ denotes the zero number in $(0,\infty)$.
Let $w$ be the solution with initial data $w_0$.
The same arguments as in the proof of \cite[Proposition~2]{Q17} guarantee 
that $\int(v(\cdot,t+\delta)-w(\cdot,t))\,dx$ stays positive and $w(0,t)\le v(0,t+\delta)$.
This implies that $w$ (hence also $u$) is bounded.
\end{proof}

\section{Nonconvex nonlinearities with critical growth}
\label{sec-3}

In this section we will assume that $p=p_S$
and the nonlinearity $f:[0,\infty)\to[0,\infty)$ is $C^1$ and satisfies 
\begin{equation} \label{f1}
(0,\infty)\to\R:s\mapsto \frac{f(s)}s \ \hbox{ is nondecreasing},
\end{equation}
\begin{equation} \label{f2}
\lim_{\lambda\to\infty}\frac{f(\lambda s)}{f(\lambda)}=s^p,\quad\hbox{locally uniformly for }s\in(0,\infty),
\end{equation}
there exist $\eta>0$ and $C_\eta\ge0$ ($C_\eta=0$ if $\Omega=\R^n$) such that
\begin{equation} \label{f3}
f(s)s\ge(2+\eta){\mathcal F}(s)-C_\eta,\quad\hbox{where }\ {\mathcal F}(s):=\int_0^s f(z)\,dz,
\end{equation}
and there exist 
$C_f>0$ and $\tilde C_f\ge0$ ($\tilde C_f=0$ if $\Omega=\R^n$) such that
\begin{equation} \label{f4}
f(s)\le C_fs^p+\tilde C_f.
\end{equation}
(We could take $\tilde C_f>0$ if $\Omega=\R^n$ and $u_0\in H^1$.)
We will consider classical solutions of \eqref{MPf}
with initial data 
\begin{equation} \label{ass-u0}
u_0\in C(\bar\Omega)\cap L^\infty(\Omega)\cap{\mathcal E}(\Omega), \quad u_0\ge0, 
\end{equation}
where   
$$ {\mathcal E(\Omega)}:=\{w\in L^{p+1}(\Omega): \nabla w\in L^2(\Omega)\}.$$
It is known (see the arguments in \cite[Example~51.28]{QS19}) that the energy function
$$ t\mapsto E(u(\cdot,t)):= \frac12\int_\Omega|\nabla u(x,t)|^2\,dx 
  -\int_\Omega{\mathcal F}(u(x,t))\,dx $$
is well defined, nonincreasing and
$$ E(u(\cdot,t_1))-E(u(\cdot,t_2))=\int_{t_1}^{t_2}\int_\Omega u_t^2\,dx\,dt, \qquad t_1<t_2. $$
In addition, if $E(u(\cdot,t))<0$ for some $t\ge0$, then $T(u_0)<\infty$.
Consequently, the energy function is nonnegative if the solution is global.

\begin{theorem} \label{thm-nonrad-3}
Assume \eqref{ass-Omegap0}, $p=p_S$ and \eqref{f1}--\eqref{ass-u0}.
Let $u$ be global and unbounded. Then $u$ is an $\eps$-threshold.
More precisely, let $v$ be the solution with initial data $v_0=\lambda u_0$,
where $\lambda>0$. Then we have the following:

\begingroup
    \addtolength{\leftmargini}{-10pt}
\begin{itemize}
\item[(i)] If $\lambda>1$, then $v$ blows up in finite time.

\smallskip
\item[(ii)] If $\lambda<1$, then $v$ is global and bounded.
In addition, if $f(s)=s^{p_S}$, 
then \hfill\break
\hbox{$\lim_{t\to\infty}\|v(\cdot,t)\|_\infty=0$.} 
\end{itemize}
\endgroup
\end{theorem}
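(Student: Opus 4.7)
The strategy rests on the observation that by \eqref{f1}, the map $s\mapsto f(s)/s$ is nondecreasing, which gives $f(\lambda s)\ge \lambda f(s)$ for $\lambda\ge 1$ and the reverse for $\lambda\le 1$. Hence $\lambda u$ is a classical subsolution of \eqref{MPf} for $\lambda>1$, and a supersolution for $\lambda<1$, with matching boundary and initial data $\lambda u_0 = v_0$. The comparison principle therefore yields, on the common existence interval,
\begin{equation*}
v \ge \lambda u \text{ if } \lambda > 1, \qquad v \le \lambda u \text{ if } \lambda < 1.
\end{equation*}
In the second case, the global existence of $u$ immediately forces $v$ to be global, so the nontrivial claim in (ii) is boundedness.

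For (i), I argue by contradiction: assume $v$ is global. Then $v\ge \lambda u$ forces $v$ to be unbounded as well. Pick sequences $t_n\to\infty$ and $x_n\in\Omega$ with $M_n:=u(x_n,t_n)\to\infty$ and define the parabolic rescaling
\begin{equation*}
\tilde w_n(y,s) := M_n^{-1} w\bigl(x_n + M_n^{-(p-1)/2}y,\; t_n + M_n^{-(p-1)}s\bigr)
\end{equation*}
for $w=u$ and $w=v$, using the \emph{same} $M_n$ for both. Each $\tilde w_n$ solves $\partial_s\tilde w_n - \Delta_y\tilde w_n = M_n^{-p}f(M_n\tilde w_n)$, and by \eqref{f2} the right-hand side converges locally uniformly to $\tilde w_n^{p_S}$. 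The energy identity furnished by \eqref{f3}, combined with the growth bound \eqref{f4} and standard parabolic regularity, provides uniform local estimates that let me extract subsequential limits $\tilde u_n\to U$ and $\tilde v_n\to V$, where $U,V\ge 0$ are classical entire solutions of $w_t-\Delta w = w^{p_S}$ on $\R^n\times\R$ with $V\ge\lambda U$ and $U(0,0)=1$. A rigidity argument---using the critical scale invariance, the monotonicity of $E$, and the resulting Pohozaev--Struwe type conclusion that any finite-energy backward global entire solution must be stationary---identifies $U$ and $V$ with Aubin--Talenti bubbles. A direct pointwise analysis of bubble profiles (comparing behavior at the bubble centers and at $|y|\to\infty$) then shows that two such bubbles cannot satisfy $V\ge\lambda U$ with $\lambda>1$, contradiction.

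For (ii), suppose for contradiction that $v$ is unbounded and pick $(y_n,s_n)$ with $N_n:=v(y_n,s_n)\to\infty$; because $v\le\lambda u$ we also have $u(y_n,s_n)\ge N_n/\lambda$, so rescaling both $u$ and $v$ by $N_n$ (as above) yields entire solutions $V$ and $U$ of the critical heat equation with $V(0,0)=1$ and $U\ge\lambda^{-1}V$. Since $\lambda^{-1}>1$, this is exactly the forbidden configuration of (i), hence $v$ is bounded. Once boundedness is known, the orbit $\{v(\cdot,t)\}_{t\ge 0}$ is precompact in the relevant topology by standard parabolic regularity, and $E$ is a strict Lyapunov functional, so $\omega(v)$ is contained in the set of bounded nonnegative steady states. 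For $f(s)=s^{p_S}$ with $\Omega$ starshaped (or $\Omega=\R^n$), Pohozaev's identity excludes nontrivial such steady states, so $\omega(v)=\{0\}$ and $\|v(\cdot,t)\|_\infty\to 0$.

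The principal obstacle is the rigidity step: passing to the limit in the nonlinearity via \eqref{f2}, securing uniform parabolic estimates for the rescaled solutions via \eqref{f3}--\eqref{f4}, and classifying the resulting entire positive solutions of $w_t-\Delta w = w^{p_S}$ as stationary Aubin--Talenti bubbles. A secondary technical point is the careful coordination of the two rescalings---using $M_n=u(x_n,t_n)$ in (i) and $N_n=v(y_n,s_n)$ in (ii)---so that the inequality between $U$ and $V$ survives nontrivially in the limit and is not degraded to a tautology by one of the limits collapsing to zero.
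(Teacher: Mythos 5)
There is a genuine gap in the compactness step that drives both (i) and (ii). In your rescaling for (i) you normalize both $u$ and $v$ by $M_n=u(x_n,t_n)$, i.e.\ by the \emph{smaller} solution's height. This is the right choice to prevent $\tilde u_n\to 0$, but it leaves $\tilde v_n(0,0)=v(x_n,t_n)/M_n\ge\lambda$ with no upper bound whatsoever: since $v\ge\lambda u$ gives no inequality of the form $v\le Cu$, the rescaled family $\tilde v_n$ need not be locally uniformly bounded, and one cannot invoke parabolic regularity to extract a classical entire limit $V$, let alone identify it with an Aubin--Talenti bubble. Rescaling instead by the larger solution's height fixes the compactness but re-introduces the degeneracy you flag (the smaller limit may be $0$, and then $V\ge\lambda U$ is vacuous). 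The paper does not resolve this dilemma by a single rescaling; it inserts $m$ intermediate solutions $v^0_0\le\dots\le v^m_0$ between $u_0$ and $(u_0+v_0)/2$, synchronizes their blow-up times via Lemma~\ref{lem-3}, and shows (Lemma~\ref{lem1-3}, using the intersection Lemma~\ref{lem-intersect-3}) that each adjacent pair contributes a universal quantum $c_0>0$ to $\int f(w)w$ at the common times $t_k$. Accumulating $m$ such quanta and using \eqref{f3} forces $E((1+\eps)w(\cdot,t_k))<0$, hence finite-time blow-up of $(1+\eps)w\le v$ --- the contradiction is obtained through the energy, not a direct pointwise ordering of two bubble limits. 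So the architecture of the actual proof is substantially different from, and more involved than, what you propose, precisely because the ``careful coordination'' you mention as a secondary point is in fact the central obstruction.

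A further, separate error: in (ii) you argue that once $v$ is bounded, $\omega(v)$ consists of steady states, which Pohozaev excludes on $\R^n$. But on $\R^n$ the equation $-\Delta w=w^{p_S}$ \emph{does} have positive bounded steady states --- the Aubin--Talenti bubbles $U_A$ --- so Pohozaev's identity does not exclude them and the omega-limit argument collapses in the whole-space case. The paper treats $\Omega=\R^n$ differently: it sets $w_0=(1+\eps)v_0$, shows $\|v(\cdot,t_k)\|_\infty\to A>0$ would force $v(\cdot,t_k)\to U_A$ and $w(\cdot,t_k)\to U_B(\cdot-y_0)$ with $B>A$, and then uses that $U_A$ and $U_B(\cdot-y_0)$ always intersect (property~\eqref{asymptUA2}), contradicting $w\ge(1+\eps)v$. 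Also, for the boundedness part of (ii), the paper does not redo any rescaling: it simply applies assertion (i) with the roles of $u$ and $v$ exchanged (since $u_0=\lambda^{-1}v_0$ with $\lambda^{-1}>1$, unboundedness of $v$ together with (i) would force $u$ to blow up, contradicting globality of $u$), which is cleaner than your independent rescaling and avoids the compactness issue entirely.
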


In the proof of Theorem~\ref{thm-nonrad-3} we will need several lemmas. 
The first one deals with intersection properties of positive solutions
of the equation 
\begin{equation} \label{eq-GS-3}
\Delta U+U^p=0\quad\hbox{in }\ \R^n,
\end{equation}
where $p=p_S$.
Recall from \cite[Section~9]{QS19} that 
$U_1(y):=\Bigl(\frac{n(n-2)}{n(n-2)+|y|^2}\Bigr)^{(n-2)/2}$ is the unique positive radial solution of
\eqref{eq-GS-3}  
satisfying $U_1(0)=1$.
Let $U_A(y):=AU_1(A^{(p-1)/2}y)$, $A>0$, be the rescaled solution satisfying
$U_A(0)=A$. Notice that if $A\ne1$, then
\begin{equation} \label{intersectU1A-3}
U_1(y)=U_A(y) \quad\hbox{if and only if}\quad |y|=A_1:=A^{-1/(n-2)}\sqrt{n(n-2)},
\end{equation}
and if $A<1$, then 
\begin{equation} \label{U1Apos-3}
U_1(y)>U_A(y) \quad\hbox{if and only if}\quad |y|<A_1.
\end{equation}
In addition, if $c\in(0,1)$, then
\begin{equation} \label{U1small-3}
U_1(y)\le c \quad\hbox{if and only if}\quad |y|\ge R_c:=\sqrt{n(n-2)(c^{-2/(n-2)}-1)}.
\end{equation}
We also note that:
\begin{equation} \label{asymptUA2}  
\hbox{If $B>A>0$ and $y_0\in\R^n$, then $U_A$ and $U_B(y-y_0)$ intersect.}
\ee
Indeed, since
$$U_A(y)=A\big(1+c(n)A^{\frac{4}{n-2}}|y|^2\big)^{\frac{2-n}{2}}\sim {\tilde c}(n)A^{-1}|y|^{2-n},\ \hbox{ as } |y|\to\infty,$$
we have $W(y):=U_B(y-y_0)-U_A(y)\sim {\tilde c}(n)(B^{-1}-A^{-1})|y|^{2-n}<0$ as $|y|\to\infty$
and $W(y_0)\ge B-A>0$.
The following lemma gives a more quantitative version of property \eqref{asymptUA2} that is needed for
the proof of Theorem~\ref{thm-nonrad-3}.

\begin{lemma} \label{lem-intersect-3}
Let
\begin{equation} \label{ass-lem-intersect-3}
A\in\Bigl[\frac12,1\Bigr),\quad R>R^*:=2^{(n-1)/(n-2)}\sqrt{n(n-2)},\quad |y_0|<\frac R2.
\end{equation}
Then $R/2>\max(R_{1/2},A_1)$ and
there exists $y$ such that
\begin{equation} \label{U1Ay0-3}
|y|<\frac R2 \quad\hbox{and}\quad U_1(y)=U_A(y-y_0).
\end{equation}
\end{lemma}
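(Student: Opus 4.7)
The plan is to verify the elementary size bound $R/2>\max(R_{1/2},A_1)$ and then to produce the intersection $y$ by an intermediate value argument on a suitable line segment. The lower bound on $R$ is immediate: from $A\ge 1/2$ we get $A^{-1/(n-2)}\le 2^{1/(n-2)}$, hence $A_1\le 2^{1/(n-2)}\sqrt{n(n-2)}<R/2$ by the hypothesis $R>R^*$; the same estimate gives $R_{1/2}=\sqrt{n(n-2)(2^{2/(n-2)}-1)}<2^{1/(n-2)}\sqrt{n(n-2)}<R/2$.

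For the intersection, I would set $W(y):=U_1(y)-U_A(y-y_0)$ and $b:=A^{2/(n-2)}\in[2^{-2/(n-2)},1)$. The case $y_0=0$ is trivial by \eqref{intersectU1A-3}, since $W=0$ on the entire sphere $|y|=A_1<R/2$. So assume $y_0\ne 0$ and set $\hat y_0:=y_0/|y_0|$. At the origin, $W(0)=1-U_A(-y_0)\ge 1-A>0$, so it would be enough to exhibit a point $y^*=\rho\hat y_0$ with $\rho<R/2$ and $W(y^*)<0$: the intermediate value theorem applied to $W$ along the segment $t\mapsto ty^*$, $t\in[0,1]$, then produces a zero of $W$ at some point of norm $<\rho<R/2$, which is exactly \eqref{U1Ay0-3}.

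The search for $y^*$ is purely algebraic. Using the explicit formula $U_1(y)=(c_0/(c_0+|y|^2))^{(n-2)/2}$ with $c_0:=n(n-2)$, the inequality $W(y)<0$ is equivalent, after raising to the power $2/(n-2)$ and cross-multiplying, to
\[
c_0(1-b)<b\bigl(|y|^2-b|y-y_0|^2\bigr).
\]
At $y=\rho\hat y_0$ the right-hand side equals $b\bigl((1-b)\rho^2+2b\rho|y_0|-b|y_0|^2\bigr)$; provided $\rho>|y_0|/2$, the subexpression $2b^2\rho|y_0|-b^2|y_0|^2$ is positive, so a sufficient condition is $\rho^2>c_0/b=A_1^2$. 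It therefore suffices to pick any $\rho\in(\max(A_1,|y_0|/2),R/2)$, an interval that is nonempty since $A_1<R/2$ (established above) and $|y_0|/2<R/4<R/2$ (from $|y_0|<R/2$).

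The only subtle point is the matching between $R^*$ and $A_1$: one needs $R/2$ to exceed $A_1$ uniformly in $A\in[1/2,1)$, and the threshold $R^*=2^{(n-1)/(n-2)}\sqrt{n(n-2)}$ in \eqref{ass-lem-intersect-3} is exactly what is required. Beyond that, the argument is elementary, the hypotheses $A\ge 1/2$ and $|y_0|<R/2$ serving only to guarantee that the allowed interval for $\rho$ is nonempty.
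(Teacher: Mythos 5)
Your proof is correct and follows essentially the same route as the paper: both restrict to the line through the origin and $y_0$, note that $W(0)\ge 1-A>0$, and apply the intermediate value theorem after producing a point of that line of norm less than $R/2$ where $W\le 0$, with the bound $R/2>\max(R_{1/2},A_1)$ obtained from $A\ge 1/2$ and $R>R^*$ exactly as you do. The only difference is how the sign change is located: the paper argues qualitatively, splitting on the sign of $U_1(y_0)-A$ and using the intersection radius $A_1$ from \eqref{intersectU1A-3} together with the radial monotonicity of $U_A$, whereas you verify $W(\rho\,y_0/|y_0|)<0$ for any $\rho\in\bigl(\max(A_1,|y_0|/2),R/2\bigr)$ by a direct computation with the explicit formulas; both arguments are sound.
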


\begin{proof}
The inequality $R/2>\max(R_{1/2},A_1)$ is obvious.

If $y_0=0$, then the remaining assertion follows from \eqref{intersectU1A-3}
since $R/2>A_1$.
Hence we may assume $y_0\ne0$.
Set
$$ \varphi(t):=U_1(ty_0)-U_A((t-1)y_0), \quad y\geq0,$$
and notice that $\varphi(0)\ge1-A>0$.

If $\varphi(1)\le0$, then there exists $t_0\in(0,1]$ such that
$\varphi(t_0)=0$ and $|t_0y_0|\leq|y_0|<R/2$,
hence it is sufficient to choose $y:=t_0y_0$.

Next assume $\varphi(1)>0$, i.e.~$U_1(y_0)>A$.
Then \eqref{U1small-3} implies 
$$|y_0|<\sqrt{n(n-2)(A^{-2/(n-2)}-1)}<A_1,$$ 
hence $t_1:=\frac{A_1}{|y_0|}>1$.
Now \eqref{intersectU1A-3} and the fact that $U_A$ is radially decreasing guarantee
$$ U_1(t_1y_0)=U_A(t_1y_0)<U_A((t_1-1)y_0), $$
so that $\varphi(t_1)<0$. Consequently, there exists $t_0\in(1,t_1)$ such that
$\varphi(t_0)=0$. In addition,
$|t_0y_0|<|t_1y_0|=A_1<R/2$,
hence it is sufficient to choose $y:=t_0y_0$.
\end{proof}

Property \eqref{U1small-3} implies that
$U_1(y)>\frac12$ if and only if $|y|<R_{1/2}$. 
Set
\begin{equation} \label{c0-3}
c_0=c_0(n):=\frac1{2C_f^{(n-2)/2}}\int_{|y|<R_{1/2}}\Bigl(U_1^{p+1}(y)-\Bigl(\frac12\Bigr)^{p+1}\Bigr)\,dy > 0. 
\end{equation}

\begin{lemma} \label{lem1-3}
Let $u,u'$ be global unbounded solutions with initial data $u_0$ and $u'_0$
satisfying \eqref{ass-u0}. Let $\eps>0$ and $u_0\ge(1+\eps)u'_0$.
Let $c_1,C_1>0$ and assume  that, for some sequence $t_k\to\infty$,
we have $M_k:=\|u(\cdot,t_k)\|_\infty\to\infty$, 
$\delta_k\ge c_1M_k/f(M_k)$, and 
$$ \|u(\cdot,t)\|_\infty\le C_1M_k\quad\hbox{for}\quad t\in[t_k-\delta_k,t_k].$$
Then 
\begin{equation} \label{vuc0-3}
\int_\Omega (f(u)u)(x,t_k)\,dx\ge c_0+\int_\Omega (f(u')u')(x,t_k)\,dx
\end{equation}
for $k$ large enough,
where $c_0$ is defined in \eqref{c0-3}.
\end{lemma}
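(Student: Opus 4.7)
The plan is to prove the lemma via a blow-up rescaling argument centered at near-maximum points of $u(\cdot,t_k)$. Choose $x_k\in\Omega$ with $u(x_k,t_k)\ge M_k-1/k$, set $\lambda_k:=\sqrt{M_k/f(M_k)}\to 0$, and define
$$U_k(y,s):=\frac{u(x_k+\lambda_k y,\,t_k+\lambda_k^2 s)}{M_k},\qquad U'_k(y,s):=\frac{u'(x_k+\lambda_k y,\,t_k+\lambda_k^2 s)}{M_k},$$
on $\Omega_k\times[-c_1,0]$ with $\Omega_k=(\Omega-x_k)/\lambda_k$. They satisfy $\partial_s U_k-\Delta U_k=f(M_k U_k)/f(M_k)$, whose right-hand side is bounded on $\{U_k\le C_1\}$ by \eqref{f4} and converges to $U_k^p$ locally uniformly by \eqref{f2}. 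Condition \eqref{f1} combined with the comparison principle applied to the subsolution $(1+\eps)u'$ (using $f((1+\eps)s)\ge(1+\eps)f(s)$) upgrades the assumption $u_0\ge(1+\eps)u'_0$ to the pointwise ordering $u\ge(1+\eps)u'$ globally, hence $U_k\ge(1+\eps)U'_k$.

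Next I would apply interior parabolic Schauder estimates to extract a subsequence with $U_k\to U$ and $U'_k\to U'$ in $C^{2+\alpha,1+\alpha/2}_{\mathrm{loc}}$. To identify the limits I use that for global solutions both $E(u(\cdot,t))$ and $E(u'(\cdot,t))$ are nonincreasing and nonnegative, hence convergent, so $\int_{t_k-c_1\lambda_k^2}^{t_k}\int u_t^2\,dx\,dt\to 0$ (since $t_k-c_1\lambda_k^2\to\infty$ as $\lambda_k\to 0$). Combined with the critical scaling identity $\int u_t^2\,dx\,dt=M_k^2\lambda_k^{n-2}\int(\partial_s U_k)^2\,dy\,ds$ and the bound $M_k^2\lambda_k^{n-2}=1/g(M_k)^{(n-2)/2}\ge 1/(2C_f)^{(n-2)/2}$ (where $g(s):=f(s)/s^p$ and $\limsup g(M_k)\le C_f$ by \eqref{f4}), this gives $\int(\partial_s U_k)^2\to 0$, hence $\partial_sU=\partial_sU'=0$. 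When $\Omega$ is bounded I also rule out $\mathrm{dist}(x_k,\partial\Omega)/\lambda_k\to d<\infty$: otherwise $U$ would be a bounded positive solution of $-\Delta U=U^p$ on a half-space with zero Dirichlet data, violating the standard Liouville theorem. Thus the limit problem sits on $\R^n$, and the Caffarelli--Gidas--Spruck classification forces $U=U_1$ (since $U\le 1$ at $s=0$ with equality at $y=0$). For $U'$, the same classification combined with the asymptotics $U_{A,y_0}(y)\sim c_nA^{-1}|y|^{2-n}$ and the inherited bound $U'\le U_1/(1+\eps)\sim c_n(1+\eps)^{-1}|y|^{2-n}$ at infinity forces $A\ge 1+\eps$, contradicting $A=\sup U'\le 1/(1+\eps)$; therefore $U'\equiv 0$.

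For the quantitative bound I localize:
$$\int_\Omega[f(u)u-f(u')u'](t_k)\,dx\ge\int_{B(x_k,R_{1/2}\lambda_k)}[f(u)u-f(u')u']\,dx=\frac{1}{g(M_k)^{(n-2)/2}}\int_{B(0,R_{1/2})}[J_k(U_k)-J_k(U'_k)]\,dy,$$
where $J_k(U):=(f(M_k U)/f(M_k))U$ and I used $\lambda_k^n M_k f(M_k)=1/g(M_k)^{(n-2)/2}$. I split into subsequences along which $g(M_k)\to L\in[0,C_f]$. If $L>0$, dominated convergence combined with the monotonicity bound $J_k(U'_k)\le(f(M_k\|U'_k\|_\infty)/f(M_k))\|U'_k\|_\infty\to 0$ (valid since $\|U'_k\|_\infty\to 0$ on $B(0,R_{1/2})$) gives LHS $\to L^{-(n-2)/2}\int_{B(0,R_{1/2})}U_1^{p+1}\,dy\ge C_f^{-(n-2)/2}\int U_1^{p+1}\,dy>c_0$. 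If $L=0$, I instead use the direct pointwise estimate $f(u)u-f(u')u'\ge(1-(1+\eps)^{-2})f(u)u$ (from $u\ge(1+\eps)u'$ and the monotonicity of $sf(s)$) together with $\int_{B(x_k,R_{1/2}\lambda_k)}f(u)u\,dx\sim g(M_k)^{-(n-2)/2}\int U_1^{p+1}\,dy\to+\infty$, so LHS $\to\infty$. A standard subsequence argument then yields LHS $\ge c_0$ for all $k$ large. The main technical obstacle is handling the rescaled prefactor $g(M_k)^{-(n-2)/2}$, which may diverge when $f$ drops below $s^p$ in a slowly varying way; the case split above circumvents this by exploiting either the rescaled limit or the direct pointwise bound depending on the regime.
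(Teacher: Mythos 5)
Your proposal is correct and follows the same overall architecture as the paper's proof (rescaling by $\nu_k=\sqrt{M_k/f(M_k)}$ at near-maximum points of $u$, energy dissipation of the two global solutions to make the rescaled limits steady states, exclusion of the half-space case by the Dirichlet Liouville theorem, classification of the limit of the rescaled $u$ as $U_1$, and the change-of-variables computation in which criticality $p=p_S$ makes the prefactor $M_k^{(n+2)/2}f(M_k)^{(2-n)/2}=g(M_k)^{-(n-2)/2}$ bounded below via \eqref{f4}). Where you genuinely diverge is the key step excluding a nontrivial bubble limit for the rescaled $u'$: the paper only proves the weaker statement that $w'_k(\cdot,0)\le 1/2$ on a fixed ball, by locating (via the quantitative intersection Lemma~\ref{lem-intersect-3}) an intersection point of $U_1$ and $U_A(\cdot-y_0)$ inside $B_{R/2}$, which contradicts $w_k\ge(1+\eps)w'_k$ there; you instead pass the inequality $U_1\ge(1+\eps)U'$ to the limit on all of $\R^n$ (legitimate, since the convergence is locally uniform and $\Omega_k$ exhausts $\R^n$ once the boundary case is excluded) and compare tail decays, $U_A(\cdot-y_0)\sim \tilde c(n)A^{-1}|y|^{2-n}$, to force $A\ge 1+\eps$, contradicting $A\le 1/(1+\eps)$, hence $U'\equiv 0$. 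This is the same qualitative fact the authors record in \eqref{asymptUA2} and use elsewhere (proof of Theorem~\ref{thm-nonrad-3}(ii)), so your route is valid and in fact gives a slightly stronger intermediate conclusion with less machinery; what the paper's quantitative lemma buys is that one never needs the global limit inequality, only uniform convergence on a fixed ball. Your final case split on $g(M_k)=f(M_k)/M_k^p\to L$ is harmless but unnecessary: as in the paper, the one-sided bound $f(M_k)\le(1+\zeta)C_fM_k^p$ for large $k$ already gives the prefactor lower bound $((1+\zeta)C_f)^{-(n-2)/2}$ uniformly, and the possible divergence of $g(M_k)^{-(n-2)/2}$ only helps.

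Two small points to fix. First, your justification that the rescaled nonlinearity $f(M_kU_k)/f(M_k)$ is bounded ``by \eqref{f4}'' fails exactly in the regime you worry about: \eqref{f4} gives $f(M_kz)/f(M_k)\le C_fz^p/g(M_k)+\tilde C_f/f(M_k)$, which is not uniformly bounded if $g(M_k)\to0$; the correct argument (as in the paper) combines \eqref{f1}, which gives $f(M_kz)/f(M_k)\le z$ for $z\le 1$, with \eqref{f2} on $[\sigma,C_1]$, yielding boundedness and uniform convergence to $z^p$ on all of $[0,C_1]$. Second, the conclusion ``for $k$ large'' should be framed, as you indicate, as a contradiction/subsequence argument, since the blow-up limits exist only along subsequences; this is routine but should be stated at the outset.
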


\begin{proof}
Assume to the contrary that \eqref{vuc0-3}
fails for some subsequence of $\{t_k\}$.
Passing to this subsequence we may assume that
\begin{equation} \label{vuc0-3contrary}
 \int_\Omega (f(u)u)-f(u')u')(x,t_k)\,dx<c_0,\quad\hbox{for all $k$.}
\end{equation}
 
Since $(1+\eps)u'$ is a subsolution owing to \eqref{f1}, we have $u\ge(1+\eps)u'$.
Choose $x_k\in\Omega$ such that $u(x_k,t_k)-M_k\to0$.
Denote $\nu_k:=\sqrt{M_k/f(M_k)}$ and set also
\begin{equation} \label{wk-3}
\begin{aligned}
  w_k(y,s) &:= M_k^{-1} u(x_k+\nu_ky,t_k+\nu_k^2s), \\
  w'_k(y,s) &:= M_k^{-1} u'(x_k+\nu_ky,t_k+\nu_k^2s).
\end{aligned} 
\end{equation}
Then $w_k,w'_k$ are solutions of the equation
\begin{equation} \label{eq-ub10-3}
 w_s -\Delta w = \frac{1}{f(M_k)}f(M_kw) \quad \hbox{in}\quad\Omega_k\times[-c_1,0],
\end{equation}
where $\Omega_k=\nu_k^{-1}(\Omega-x_k)$.
Moreover, 
\begin{equation} \label{wk-bound}
0\le(1+\eps)w'_k\le w_k\le C_1,\quad  w_k(\cdot,0)\leq 1\quad\hbox{and}\quad w_k(0,0)\to1.
\end{equation}
In particular, \eqref{f1} and \eqref{f2} guarantee that the right-hand side in \eqref{eq-ub10-3}
is bounded and $\frac{1}{f(M_k)}f(M_kz)\to z^p$ as $k\to\infty$, uniformly for $z\in[0,C_1]$.  
Since $u,u'$ are global, the corresponding energies are nonincreasing and nonnegative,
hence 
\begin{equation} \label{epsk}
\eps_k:=\int_{t_k-\delta_k}^{t_k}\int_\Omega (u^2_t+(u')_t^2)\,dx\,dt\to0 \quad\hbox{as }\ k\to\infty.
\end{equation}
Since 
$$ \int_{t_k-c_1\nu_k^2}^{t_k}\int_\Omega u^2_t\,dx\,dt
 = \int_{-c_1}^{0}\int_{\Omega_k}(w_k)^2_s\,dy\,ds 
$$ due to $p=p_S$
(and similarly for $u'$),
\eqref{epsk} implies
$$ \int_{-c_1}^{0}\int_{\Omega_k}((w_k)^2_s+(w'_k)_s^2)\,dy\,ds\le\eps_k\to 0 \quad\hbox{as }\ k\to\infty.$$
In the same way as in \cite{G86} we obtain a subsequence of $w_k$ converging locally uniformly 
to the solution $w$ of $-\Delta U=U^p$ in $\R^n$ or to the solution 
$\tilde w$ of this equation in a half-space
satisfying the homogeneous Dirichlet boundary conditions.
Since the existence of $\tilde w$ contradicts 
\cite[Theorem~8.2]{QS19}, we deduce from
\cite[Theorems~8.1(ii) and~9.1]{QS19} and \eqref{wk-bound} that 
\begin{equation} \label{wU1-3}
\begin{aligned}
&\hbox{$w_k(\cdot,0)\to w=U_1$ locally uniformly as $k\to\infty$,} \\ 
&\hbox{$B_R^k:=\{y\in\Omega_k:|y|<R\}=B_R:=\{y\in\R^n:|y|\le R\}$ for $k$ large.}
\end{aligned}
\end{equation}
The same arguments show that, passing to a subsequence, we have
\begin{equation} \label{wU1-3b}
\begin{aligned}
&\hbox{$w'_k(\cdot,0)\to w'$ locally uniformly as $k\to\infty$, where either $w'=0$ or} \\ 
&\hbox{$w'(y)=U_B(y-y^*)$ for some $y^*\in\R^n$ and $B\in(0,1/(1+\eps)]$.}
\end{aligned}
\end{equation}

Fix $R>R^*$ such that $U_1(y)<1/6$ for $|y|=R/2$ (see \eqref{U1small-3}).
Notice that if $k$ is large enough, then \eqref{wU1-3} implies that 
$N_k:=\{x:|x-x_k|\le R\nu_k\}\subset\Omega$ and
$|w_k(y,0)-U_1(y)|<1/12$ for $|y|\le R$, hence
\begin{equation} \label{uest-3}
w_k(y,0)<1/4\quad\hbox{for}\quad R/2\le|y|\le R.
\end{equation}
We have $1/(1+\eps)\ge \max_{|y|\le R}w'_k(y,0)=w'_k(y_k,0)$ for some $y_k$,
$|y_k|\le R$.
 We claim that 
\begin{equation} \label{uest-3b}
w'_k(y,0)\le 1/2,\quad |y|\le R.
\ee

Assume for contradiction that $w'_k(y_k,0)>1/2$ for some subsequence. Then \eqref{uest-3} implies 
$|y_k|<R/2$.
Passing to a subsequence we may assume 
\begin{equation} \label{yky0-3}
y_k\to y_0, \quad\hbox{where}\quad |y_0|\le\frac R2.
\end{equation}
Since $w'_k(y_k,0)\in(\frac12,\frac1{1+\eps}]$,
passing to a subsequence we may also assume
$$w'_k(y_k,0)\to A,\quad\hbox{where}\quad A\in\Bigl[\frac12,\frac1{1+\eps}\Bigr].$$
We have
\begin{equation} \label{z1-3}
w'_k(\cdot,0)\le\frac1{1+\eps},\quad
w'_k(y_k,0)=\max_{|y|\le R}w'_k(y,0)\to A,
\end{equation}
hence \eqref{yky0-3} implies 
\begin{equation} \label{z2-3}
w'(y_0)=A=\max_{|y|\le R} w'.
\end{equation}
By \eqref{wU1-3b}, $y^*$ is the unique local maximum of $w'$ and, since $|y_0|\le R/2$, we have
$y_0=y^*$, $A=B$ and
$w'_k(y,0)\to w'(y)=U_A(y-y_0)$.
Notice that $w_k\ge(1+\eps)w'_k$,
$(w_k-w'_k)(y,0)$ converges uniformly for $|y|\le R$
to $U_1(y)-U_A(y-y_0)$,
but $U_1(y)-U_A(y-y_0)=0$ for some $|y|<R/2$ due to Lemma~\ref{lem-intersect-3},
which yields a contradiction. 
 This proves~\eqref{uest-3b}.
  
 Now take any $\zeta>0$. 
For $k$ large enough, using $w_k\ge w'_k$, \eqref{f1}, \eqref{f2}, \eqref{f4}, \eqref{wU1-3} and \eqref{uest-3b}, 
we obtain
$$ 
\begin{aligned}
\int_{N_k}&(f(u) u-f(u') u')(x,t_k)\,dx \\ 
&=\nu_k^n\int_{|y|\le R}(f(M_kw_k)M_kw_k-f(M_kw'_k)M_kw'_k)(y,0)\,dy \\
&\ge\nu_k^n\int_{|y|<R_{1/2}}(f(M_kw_k)M_kw_k-f(M_kw'_k)M_kw'_k)(y,0)\,dy \\
&=M_k^{(n+2)/2}f(M_k)^{(2-n)/2}\int_{|y|<R_{1/2}}\Bigl(\frac{f(M_kw_k)}{f(M_k)}w_k-\frac{f(M_kw'_k)}{f(M_k)}w'_k\Bigr)(y,0)\,dy \\
&\ge \frac1{((1+\zeta)C_f)^{(n-2)/2}}\int_{|y|<R_{1/2}}\Bigl(U_1^{p+1}(y)-\Bigl(\frac12\Bigr)^{p+1}-\zeta\Bigr)\,dy,
\end{aligned}
$$
which contradicts \eqref{vuc0-3contrary} for  $\zeta$ small enough.
\end{proof}

\begin{lemma} \label{lemM2}
Let $f$ satisfy \eqref{f1} and \eqref{f2}.
Assume $y'\le f(y)$ on $[t-\delta,t]$, $y(t-\delta)\le M/2$ and $y(t)=M$.
If $M>0$ is large enough, then $\delta\ge\frac{M}{2f(M)}$.
\end{lemma}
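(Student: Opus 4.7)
The plan is to integrate the differential inequality $y'\le f(y)$ over a subinterval of $[t-\delta,t]$ on which $y$ remains trapped in $[M/2,M]$, using only the monotonicity of $f$.

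First I would observe that assumption \eqref{f1} together with $f\ge 0$ (which is forced by \eqref{f2}) implies that $f$ itself is nondecreasing on $(0,\infty)$: for $0<s_1<s_2$,
$$f(s_1)=s_1\,\frac{f(s_1)}{s_1}\le s_1\,\frac{f(s_2)}{s_2}\le f(s_2).$$
Moreover, \eqref{f2} forces $f(M)>0$ whenever $M$ is large enough, so the quantity $M/(2f(M))$ is well defined for such $M$, and we only need to handle this regime.

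Next I would carve out the appropriate time window. Since $y$ is continuous with $y(t-\delta)\le M/2<M=y(t)$, the set $\{s\in[t-\delta,t]:y(s)\le M/2\}$ is nonempty, and its supremum $t^*$ belongs to $[t-\delta,t)$ and satisfies $y(t^*)=M/2$, while $y(s)>M/2$ for every $s\in(t^*,t]$. Symmetrically, let
$$\tilde t:=\inf\{s\in[t^*,t]:y(s)\ge M\}\in (t^*,t].$$
By continuity, $y(\tilde t)=M$ and $y(s)\le M$ for $s\in[t^*,\tilde t]$. On this subinterval $y$ takes values in $[M/2,M]$, so the monotonicity of $f$ yields $y'(s)\le f(y(s))\le f(M)$. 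Integrating over $[t^*,\tilde t]$ gives
$$\frac{M}{2}=y(\tilde t)-y(t^*)\le f(M)\,(\tilde t-t^*)\le f(M)\,\delta,$$
which is exactly the desired bound $\delta\ge M/(2f(M))$.

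There is essentially no obstacle here beyond a careful choice of the subinterval: because $y$ is only assumed to satisfy a one-sided differential inequality and is not claimed to be monotone, one has to replace the naive ``$y$ goes from $M/2$ to $M$'' with the sharper pair $(t^*,\tilde t)$ defined above. The use of \eqref{f2} is marginal, serving only to guarantee $f(M)>0$ for large $M$; the real content of the argument is that $f$ nondecreasing (a consequence of \eqref{f1}) turns the inequality $y'\le f(y)$ into the linear bound $y'\le f(M)$ on the relevant window.
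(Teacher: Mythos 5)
Your proof is correct, but it follows a genuinely different (and more elementary) route than the paper. The paper argues by contradiction: it rescales, setting $\omega=y/M$ and $\omega^+=\max(\omega,\tfrac12)$, and uses the critical-growth hypothesis \eqref{f2} to bound $f(\omega^+ M)\le f(M)\bigl((\omega^+)^p+\zeta\bigr)$ for $M$ large (this is where ``$M$ large enough'' really enters), then integrates over $[t-\delta,t]$; the approximation costs a factor $(1+\zeta)$. You instead isolate the exact window $[t^*,\tilde t]$ on which $y$ travels from $M/2$ to $M$ while staying in $[M/2,M]$, and bound the speed simply by $y'\le f(y)\le f(M)$, using only that $f$ is nondecreasing; this yields the stated constant $M/(2f(M))$ exactly, with no $\zeta$-loss and no contradiction argument, and uses \eqref{f2} only to guarantee $f(M)>0$. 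The crossing-time bookkeeping you do explicitly is implicit in the paper's restriction ``$\omega(\tau)\le 1$''. One small caveat: monotonicity of $f$ requires $f\ge0$ in addition to \eqref{f1}, and this is not literally forced by \eqref{f2} (e.g.\ $f(s)=s^2-s$ satisfies \eqref{f1}--\eqref{f2} but is negative near $0$); however, nonnegativity of $f$ is a standing assumption of Section~\ref{sec-3}, and in any case on the relevant range $[M/2,M]$ with $M$ large \eqref{f2} does give $f>0$, which is all your monotonicity step needs. So the gap is cosmetic, and your argument is, if anything, tighter than the paper's on the constant.
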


\begin{proof}
Assume to the contrary $\delta<\frac{M}{2f(M)}$ and fix $\zeta\in(0,1)$.
Set $\omega(\tau):=y(\tau)/M$ and $\omega^+(\tau):=\max\{\omega(\tau),\frac12\}$. 
Assuming $\omega(\tau)\le1$ for $\tau\in[t-\delta,s]$, $s\le t$, 
and $M$ large, we obtain
$$ 
\begin{aligned}
y(s)-y(t-\delta) &\le \int_{t-\delta}^sf(\omega(\tau) M)\,d\tau
   \le  \int_{t-\delta}^sf(\omega^+(\tau) M)\,d\tau
   \le f(M)\int_{t-\delta}^s((\omega^+(\tau)^p+\zeta)\,d\tau \\
  &\le f(M)\delta(1+\zeta)<M, 
\end{aligned} $$
which contradicts $y(t)=M$.
\end{proof}

\begin{lemma} \label{lem-3}
Let $v^j$ be global unbounded solutions with initial data $v^j_0$ 
satisfying \eqref{ass-u0}, $j=0,1,\dots,m$, and 
$v^{j+1}_0>v^j_0$ for $j=0,\dots,m-1$.
Then there exist $K>2$, $c>0$, $\delta_k^j>0$ and $t_k\to\infty$ with the following properties:
\begin{equation} \label{assert-lem-3}
\begin{aligned}
M_k^j &:=\|v^j(\cdot,t_k)\|_\infty\to\infty, \quad \delta_k^j\ge c\frac{M_k^j}{f(M_k^j)}, \\
 \|v^j(\cdot,t)\|_\infty &\le K^2M_k^j\quad\hbox{for}\quad t\in[t_k-\delta_k^j,t_k].
\end{aligned}
\end{equation}
\end{lemma}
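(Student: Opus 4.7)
The plan is to take $t_k$ as the first-passage times of $v^0$ at a geometric sequence of levels, and then verify or refine for $v^j$ with $j \ge 1$. Fix a large constant $K > 2$ to be specified, and set $t_k := \inf\{t > 0 : N^0(t) \ge K^k\}$, where $N^j(t) := \|v^j(\cdot, t)\|_\infty$. Since $v^0$ is global unbounded, $t_k$ is finite with $t_k \to \infty$, and by continuity $N^0(t_k) = K^k$. From the pointwise ordering $v^j \ge v^0$ (comparison from the initial ordering, valid since $f$ is nondecreasing by \eqref{f1}), $N^j(t) \ge N^0(t)$, so $M^j_k := N^j(t_k) \ge K^k \to \infty$, which yields the first assertion of the lemma.

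For $v^0$ itself, I would take $\delta^0_k := t_k - t_{k-1}$. By first-passage, $N^0(s) < K^k = M^0_k$ for $s \in [t_{k-1}, t_k)$, giving the required upper bound. For the lower bound on $\delta^0_k$: at a spatial maximum of $v^0(\cdot, t)$, $\Delta v^0 \le 0$, so $\partial_t v^0 \le f(v^0)$ there, and the ODE comparison argument (as in Lemma~\ref{lemM2}) gives $N^0(t_k) \le Y(t_k)$ where $Y$ solves $Y' = f(Y)$ with $Y(t_{k-1}) = K^{k-1}$. Integrating yields $\delta^0_k \ge \int_{K^{k-1}}^{K^k} ds/f(s)$, and by \eqref{f2} (which forces $f \sim s^p$ at infinity) this is $\ge c\, M^0_k/f(M^0_k)$ for a suitable constant $c = c(K,p) > 0$ and large $k$.

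The harder point is the upper bound for $v^j$ with $j \ge 1$, namely $N^j(s) \le K^2 M^j_k$ on a backward interval of length $\ge c\, M^j_k/f(M^j_k)$: although $M^j_k \ge K^k$, $v^j$ could in principle have had peaks well above $M^j_k$ before $t_k$. To address this, I would proceed by finite induction on $j$, introducing the historical high $h^j_k := \sup_{[0, t_k]} N^j$. If $h^j_k \le K^2 M^j_k$ for infinitely many $k$, restricting to that subsequence immediately gives $N^j(s) \le K^2 M^j_k$ on all of $[0, t_k]$, and the required $\delta^j_k$ is easily found (since $t_k \to \infty$ while $M^j_k/f(M^j_k) \to 0$). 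Otherwise $h^j_k/M^j_k \to \infty$, and one instead shifts $t_k$ back to a running-max time $\tilde t_k$ of $v^j$ in $[0, t_k]$, at which $N^j(\tilde t_k)$ is comparable to $h^j_k$ and the upper bound for $v^j$ becomes automatic, with the backward-interval length supplied by the ODE comparison applied to $v^j$.

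The expected main obstacle is this shift step: after replacing $t_k$ by $\tilde t_k$, the conditions previously established for $v^0, \ldots, v^{j-1}$ might no longer hold at $\tilde t_k$. To preserve them I would rely on (i) the monotonicity of $s \mapsto s/f(s)$ (equivalent to \eqref{f1}), which gives $M^j_k/f(M^j_k) \le M^0_k/f(M^0_k)$ and keeps the backward intervals nested; (ii) the finiteness of $m$, enabling a nested-subsequence/pigeonhole argument to handle all $j$ simultaneously; and (iii) the ODE comparison bounding how much each $N^i$ can grow over the shift. Carefully balancing these so as to choose $K$ and $c$ uniformly across $j = 0, \ldots, m$ is, I expect, the technical heart of the proof.
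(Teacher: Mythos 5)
Your proposal follows the same broad strategy as the paper — an iterative construction of the times $t_k$ over $j$, handling the difficulty that $v^{j+1}$ may have peaked above its value at a time chosen for $v^j$ by shifting the time backward to a running-max time of $v^{j+1}$ — but it leaves the central technical step unexecuted, and in the form you sketch it I don't think it closes.

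The concrete gap is the control of the backward shift. When you pass from $t_k$ to a running-max time $\tilde t_k$ of $v^j$, you must show that the windows $[\tilde t_k-\delta^i_k,\tilde t_k]$ for $i<j$, with their upper bound $\|v^i\|_\infty\le K^2 M^i_k$, are still valid after the shift. Your item~(iii) (``ODE comparison bounding how much each $N^i$ can grow over the shift'') is aimed at this, but it is not what is needed: the issue is not forward growth but that $\tilde t_k$ may lie so far to the left of $t_k$ that it escapes the already-established window for $v^{j-1}$. The paper resolves this quantitatively: it performs the shift as a finite cascade of backward jumps, and shows via the lower bound \eqref{fMM} (namely $f(\lambda s)/f(\lambda)\ge c_f s^{(p+1)/2}$) that each jump that does occur must be accompanied by a multiplicative jump of the sup norm by a factor $K$, so the lengths of the successive jumps decay geometrically. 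Choosing $K$ large enough that this geometric series is small, and choosing $\delta_j$ to decrease geometrically in $j$, one gets the crucial nesting $J_{k,j+1}\subset J_{k,j}$ of the (scaled-down) time windows — see the chain of inequalities \eqref{ti}. Without some version of this quantitative estimate, your monotonicity observation~(i) (which only gives $M^j_k/f(M^j_k)\le M^0_k/f(M^0_k)$, i.e.\ that the new window is \emph{shorter}) and the pigeonhole idea~(ii) do not suffice: a shorter window centered at a shifted point can still fall outside the old window. Your dichotomy on $h^j_k/M^j_k$ does not circumvent this either, because the problematic case is precisely the one where you must shift.

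Two secondary remarks. First, pegging $t_k$ to first-passage times of $v^0$ at levels $K^k$ is unnecessary; the paper just takes running-max times of $v^0$, which is cleaner and avoids tying the parameter $K$ to two unrelated roles. Second, in the first branch of your dichotomy you pass to a subsequence in $k$; since you then iterate over $j=1,\dots,m$ this may require $m$ nested subsequences, which is fine as $m$ is finite, but the paper's construction avoids any subsequence extraction beyond the initial choice for $v^0$.
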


\begin{proof}
First notice that \cite[Remark~7.1(i)]{QS25} implies that
\begin{equation} \label{fMM}
 \frac{f(\lambda s)}{f(\lambda)}\geq c_fs^{(p+1)/2} \quad\hbox{for $\lambda,s\ge1$}.
\end{equation}
Choose $K>2$ such that $K^{(p-1)/2}>(1+2/c_f)$.
We will first prove the existence of $t_{k,j}$ with the following properties:
Set 
$$M_{k,j}:=\|v^j(\cdot,t_{k,j})\|_\infty,\quad \delta_j:=\frac1{3^j4K^{p-1}}, 
\quad \delta_{k,j}:=\delta_j \frac{M_{k,j}/K}{f(M_{k,j}/K)},$$
$$I_{k,j}:=[t_{k,j}-\delta_{k,j},t_{k,j}],\quad J_{k,j}:=[t_{k,j}-\frac34\delta_{k,j},t_{k,j}].$$
Then $t_{k,j}\to\infty$ and $M_{k,j}\to\infty$ as $k\to\infty$, $J_{k,j}\subset J_{k,j-1}$ and
\begin{equation} \label{vjK}
 \frac1KM_{k,j}\le\|v^j(\cdot,t)\|_\infty\leq KM_{k,j}\quad\hbox{for}\quad t\in I_{k,j}.
\end{equation}
Notice that given $\tau\in J_{k,j}$, \eqref{vjK} implies
\begin{equation} \label{tauJ}
 \|v^j(\cdot,t)\|_\infty\le K^2\|v^j(\cdot,\tau)\|_\infty \quad\hbox{for}\quad t\in \Bigl[\tau-\frac14\delta_{k,j},\tau\Bigr].
\end{equation}
Since $\delta_{k,j}\ge\delta_j\frac{\|v^j(\cdot,\tau)\|_\infty}{f(\|v^j(\cdot,\tau)\|_\infty)}$ due to \eqref{vjK} and \eqref{f1}, 
the assertion of  the lemma will follow 
by choosing $t_k\in\bigcap_j J_{k,j}$ and $c:=\frac14\delta_m$. 

We will construct $t_{k,j}$ by induction w.r.t. $j$.
If $j=0$, then we choose $t_{k,0}\to\infty$ such that 
$M_{k,0}:=\|v^0(\cdot,t_{k,0})\|_\infty=\max_{t\le t_{k,0}}\|v^0(\cdot,t)\|_\infty$. Then $M_{k,0}\to\infty$
and the upper estimate in \eqref{vjK} with $j=0$ is trivially satisfied.
The lower estimate follows from Lemma~\ref{lemM2} and \eqref{f2} for $k$ large enough.

Next assume that $t_{k,0},\dots,t_{k,j}$ satisfy the assertion and we will find $t_{k,j+1}$.
Set $t^1:=t_{k,j}$ and $M_1:=\|v^{j+1}(\cdot,t^1)\|_\infty\geq M_{k,j}$.
If the upper estimate in \eqref{vjK} with $j$ replaced by $j+1$, $t_{k,j+1}=t^1$ and $M_{k,j+1}=M_1$ fails,
then we can find $t^2\in[t^1-\delta_{j+1}(M_1/K)/f(M_1/K),t^1]$ such that
$M_2:=\|v^{j+1}(\cdot,t^2)\|_\infty>KM_1$.
If the upper estimate in \eqref{vjK} with $j$ replaced by $j+1$, $t_{k,j+1}=t^2$ and $M_{k,j+1}=M_2$ fails,
then we can find $t^3\in[t^2-\delta_{j+1}(M_2/K)/f(M_2/K),t^2]$ such that
$\|v^{j+1}(\cdot,t^3)\|_\infty>KM_2>K^2M_1$.
This process has to stop after finitely many steps,
since otherwise $\|v^{j+1}(\cdot,t^i)\|_\infty$ would become unbounded as $i\to\infty$.
But this is impossible in view of
\begin{equation} \label{ti}
\begin{aligned}
t^{i+1} &\ge t^1-\delta_{j+1}\Bigl(\frac{M_1/K}{f(M_1/K)}+\frac{M_2/K}{f(M_2/K)}+\dots+\frac{M_i/K}{f(M_i/K)}\Bigr) \\
 &\ge t^1-\delta_{j+1}\Bigl(\frac{M_1/K}{f(M_1/K)}+\frac{KM_1/K}{f(KM_1/K)}+\dots+\frac{K^{i-1}M_1/K}{f(K^{i-1}M_1/K)}\Bigr) \\
 &\ge t^1-\delta_{j+1}\frac{M_1/K}{f(M_1/K)}\Bigl(1+K\frac{f(M_1/K)}{f(KM_1/K)}+\dots+K^{i-1}\frac{f(M_1/K)}{f(K^{i-1}M_1/K)}\Bigr) \\
 &\ge t^1-\delta_{j+1}\frac{M_1/K}{f(M_1/K)}\Bigl(1+\frac1{c_fK^{(p-1)/2}}+\frac1{c_f(K^2)^{(p-1)/2}}+\dots\Bigr) \\
 &= t^1-\frac13\delta_{j}\frac{M_1/K}{f(M_1/K)}\Bigl(1+\frac1{c_f(K^{(p-1)/2}-1)}\Bigr)\ge t^1-\frac{\delta_j}2\frac{M_1/K}{f(M_1/K)}
\end{aligned}
\end{equation}
due to \eqref{fMM} and our choice of $K$ and $\delta_j$.
Choose $t_{k,j+1}=t^i$ such that the upper estimate in \eqref{vjK} with $j$ replaced by $j+1$
is true. The corresponding lower estimate follows from Lemma~\ref{lemM2}.
Notice also that $J_{k,j+1}\subset J_{k,j}$, since
$t_{k,j+1}\le t_{k,j}$, $\delta_{k,j}\ge3\delta_{k,j+1}$ and \eqref{ti} implies
$t_{k,j+1}=t^i\ge t_{k,j}-\frac{\delta_{k,j}}2$.
\end{proof}

\begin{proof}[Proof of Theorem~\ref{thm-nonrad-3}]
(i)
Assume to the contrary that $v$ is a global solution. 

Set $w_0:=(u_0+v_0)/2$ 
and let $w$ denote the solution with initial data $w_0$.
Then $v_0\ge(1+\eps)w_0$ for some $\eps>0$ 
and, since $(1+\eps)w$ is a subsolution owing to \eqref{f1}, we have
$v\ge(1+\eps)w$.
We can assume $\eps<\eta$, where $\eta$ is the constant in \eqref{f3}.
Choose a positive integer $m$ such that 
\begin{equation} \label{choicem}
 \frac{\eps(\eta-\eps)}{2+\eta}c_0m>(1+\eps)^2E(w_0)+\frac{\eps C_\eta|\Omega|}{2+\eta},
\end{equation}
where $|\Omega|$ denotes the measure of $\Omega$ and $C_\eta|\Omega|=0$ if $\Omega=\R^n$.
Set $v^0_0=u_0$, $v^m_0=w_0$ and choose $\delta>0$ and
initial data $v^1_0\le v^2_0\le\dots\le v^{m-1}_0$ such that $v^{j+1}_0\ge(1+\delta)v^j_0$ 
for $j=0,\dots,m-1$. 
Let $v^j$ denote the solution with initial data $v^j_0$
(hence $v^0=u$, $v^m=w$).
Then $v^{j+1}\ge(1+\delta)v^j$ for $j=0,\dots,m-1$
and Lemma~\ref{lem-3} guarantees the existence of $t_k\to\infty$ such
that \eqref{assert-lem-3} is true.

Now Lemma~\ref{lem1-3} with $(u,u')$ replaced by $(v^{j+1},v^j)$, $j=0,1,\dots,m-1,$ 
implies
\begin{equation} \label{fwwc0}
\int_\Omega (f(w)w)(x,t_k)\,dx\ge c_0m\quad\hbox{for $k$ large},
\end{equation}
where we also used $w=v^m$, $u=v^0$.
Since ${\mathcal F}((1+\eps)w)\ge {\mathcal F}(w)+\eps f(w)w$
by the convexity of ${\mathcal F}$, we deduce from \eqref{f3} that
$$ {\mathcal F}((1+\eps)w)-(1+\eps)^2{\mathcal F}(w)
\ge \eps(f(w)w-(2+\eps){\mathcal F}(w))
\ge \eps\Bigl(\frac{\eta-\eps}{2+\eta}f(w)w-\frac{C_\eta}{2+\eta}\Bigr).$$ 
 By using \eqref{choicem} and \eqref{fwwc0} we obtain
$$\begin{aligned}
E((1+\eps)w(\cdot,t_k))&=(1+\eps)^2 E(w(\cdot,t_k)) \\
&\kern3mm +(1+\eps)^2\int_\Omega {\mathcal F}(w(x,t_k))\,dx 
-\int_\Omega{\mathcal F}((1+\eps)w(x,t_k))\,dx \\
&\le (1+\eps)^2 E(w_0)-\eps\int_\Omega\Bigl(\frac{\eta-\eps}{2+\eta}(f(w)w)(x,t_k)-\frac{C_\eta}{2+\eta}\Bigr)\,dx
<0.
\end{aligned}
$$
Now $v\geq(1+\eps)w$, \cite[Theorem 17.6]{QS19} and the comparison principle imply that $v$ blows up in finite time
- a contradiction.

(ii) Assertion (i) implies that $v$ is global and bounded.
If $f(s)=s^p$ and $\Omega$ is bounded, 
then $\lim_{t\to\infty}\|v(\cdot,t)\|_\infty=0$ by 
Theorem~\ref{thm-subsol2}(i). 

Finally assume that $f(s)=s^p$, $\Omega=\R^n$ and
$(1+2\eps)v_0=u_0$ 
for some $\eps\in(0,1/2)$. Set
$w_0:=(u_0+v_0)/2=(1+\eps)v_0$, hence $(1+\eps)v\le w\leq C_w$. 
Assume to the contrary that there exist $A>0$ and $t_k\to\infty$
such that $\|v(\cdot,t_k)\|_\infty\to A$.
Then similarly as above there exist $\delta>0$ and
$c_2>0$
such that $\|v(\cdot,t)\|_\infty\ge c_2A$ for $t\in[t_k-\delta,t_k]$.
Energy arguments show that 
$$\int_{t_k-\delta}^{t_k}\int_{\R^n}(v_t^2+w_t^2)\,dx\,dt \to 0 \quad\hbox{as }\ k\to\infty.$$
Choose $x_k$ such that $v(x_k,t_k)\ge(1-\delta_k)\|v(\cdot,t_k)\|_\infty$ for
some $\delta_k\to0$. Set $v_k(y,s):=v(x_k+y,t_k+s)$,
$w_k(y,s):=w(x_k+y,t_k+s)$ and notice that
$(1-\delta_k)\|v(\cdot,t_k)\|_\infty\le v_k(0,0)$, $v_k(\cdot,0)\le\|v(\cdot,t_k)\|_\infty\to A$,
$w_k(0,0)\ge(1+\eps)v_k(0,0)$ and $w_k(\cdot,0)\leq C_w$.
Passing to a subsequence 
as in the proof of Lemma~\ref{lem1-3} (cf.~\eqref{wU1-3}, \eqref{wU1-3b})
we find that
$v_k(y,0)\to U_A(y)$ and $w_k(y,0)\to U_B(y-y_0)$ locally
uniformly for some $y_0\in\R^n$, where $B>A$. 
Consequently, $U_B(y-y_0)>U_A(y)$.
This yields a contradiction since, by \eqref{asymptUA2}, the functions $U_A(y)$ and $U_B(y-y_0)$ intersect.
\end{proof}

\begin{proof}[Proof of Corollary~\ref{cor-nonrad}(iii)]
Let $f(u)=u^{p_S}$,
$u_0=u_0(|x|)$ be radially nonincreasing, piecewise $C^1$ and satisfying 
either \eqref{u0-supp1} or \eqref{u0-supp2},
and let $v_0\le u_0$, $v_0\not\equiv u_0$.
Assume to the contrary that $\limsup_{t\to\infty}\|v(\cdot,t)\|_\infty>0$.
Given $\delta>0$ 
small, we have $v(\cdot,\delta)<u(\cdot,\delta)$, hence the 
solutions $w_\eps$ with initial data
$w_\eps(x,\delta)=w_{\eps,\delta}(x):=\min(u(x,\delta),u(0,\delta)-\eps)$ satisfy 
$v\leq w_\eps\leq u$ for $t\ge\delta$ 
and $\eps\in(0,\eps_1]$ with $\eps_1$ small enough.
Moreover, $w_\eps\to u$ as $\eps\to0$.
In addition, assertion (ii) and our assumptions
guarantee that $w_\eps$ is bounded for any $\eps>0$
and does not converge to zero as $t\to\infty$.
Notice also that the zero number $z(w_\eps(\cdot,t)-U_A)$ in $(0,\infty)$ 
is nonincreasing and finite for $t>\delta$ and $A>0$,
since 
$z(w_\eps(\cdot,\delta)-U_A)\le z(u(\cdot,\delta)-U_A)+1<\infty$.

Similarly as in the proof of Theorem~\ref{thm-nonrad-3}(ii)
we find that for some $t_k\to\infty$ and $A_\eps>0$ we have 
$w_\eps(0,t_k)=\max w_\eps(\cdot,t_k)\to A_\eps$ and
$w_\eps(\cdot,t_k)\to U_{A_\eps}$ locally uniformly.
Assume $B>A_\eps$. 
Since the zero number $z(w_\eps(\cdot,t)-U_B)$ is nonincreasing and finite for $t>\delta$,
and it drops whenever $w_\eps(\cdot,0)=B$, we have $w_\eps(\cdot,t)<B$ for $t$ large enough.
Similarly we obtain $w_\eps(\cdot,0)>B$ for $t$ large enough provided $B<A_\eps$.
Consequently, $w_\eps(0,\cdot)\to A_\eps$ as $t\to\infty$ 
and the same arguments as in the proof of Theorem~\ref{thm-nonrad-3}(ii) 
yield $w_\eps(\cdot,t)\to U_{A_\eps}$ as $t\to\infty$ locally uniformly.
Let $\eps_2\in(0,\eps_1)$. 
We have $A_{\eps_2}=A_{\eps_1}=:A$, since otherwise $U_{A_{\eps_1}}$ and $U_{A_{\eps_2}}$ intersect
(cf.~\eqref{asymptUA2}),
which contradicts $w_{\eps_2}>w_{\eps_1}$.

Let us show that
\begin{equation} \label{zR002}
\hbox{if $M>\max(u_0(0),A)$ is large enough, then $z(u_0-U_M)=2$.}
\ee
Assume \eqref{u0-supp2} (the case of \eqref{u0-supp1} is similar).
We first claim that
\begin{equation} \label{zR01}
z_{(R_0,\infty)}(u_0-U_M)=1,
\ee
 where $z_{(R_0,\infty)}$ is the zero number in $(R_0,\infty)$
and $R_0$ is specified below.
Set $c_n:=(n(n-2))^{(n-2)/2}$. Then
$$ \begin{aligned}
  U_M(r) &=\frac{c_n}M (n(n-2)M^{-4/(n-2)}+r^2)^{-(n-2)/2},\\
  -U'_M(r) &=\frac{(n-2)c_n}M r(n(n-2)M^{-4/(n-2)}+r^2)^{-n/2},
\end{aligned} $$
hence
\begin{equation} \label{limM}
\lim_{r\to\infty}MU_M(r)r^{n-2}=-\lim_{r\to\infty}\frac M{n-2}U_M'(r)r^{n-1}= c_n,
\ \hbox{ uniformly for }M\ge1.
\end{equation}
Now \eqref{u0-supp2} and \eqref{limM} guarantee the existence of $R_0>0$ large such that
\begin{equation} \label{ifthen1}
\hbox{ if $M\ge1$, $r\ge R_0$ and $u_0(r)=U_M(r)$, then $-u_0'(r)>-U_M'(r)$.}
\end{equation}
Let $C_1,C_2,C_3>0$ be such that
\begin{equation} \label{C123}
 u_0(r)\in[C_1,C_2] \hbox{ and } |u_0'(r)|\le C_3 \hbox{ for any } r\in[0,R_0].
\end{equation}
Let $M_1\ge1$ be such that $U_M(R_0)<C_1$ for $M\ge M_1$.
Then \eqref{ifthen1}  and $\lim\limits_{R\to\infty}\frac{u_0(r)}{U_M(r)}=0$ imply
\eqref{zR01}.
We next claim that 
\begin{equation} \label{zR02}
\hbox{ if $M>C_2$ is large enough, then $z_{(0,R_0)}(u_0-U_M)=1$.}
\ee
To this end, it suffices to show that, for $M>C_2$ large enough,
\begin{equation} \label{ifthen2}
\hbox{ if $r\in(0,R_0)$ and $u_0(r)=U_M(r)$, then $-U_M'(r)>-u_0'(r)$.}
\end{equation}
Thus assuming that $u_0(r)=U_M(r)$ for some $r\in(0,R_0)$ and using \eqref{C123}, we obtain
$$ C_2\ge u_0(r)=U_M(r)=c_n(n(n-2)M^{-2/(n-2)}+M^{2/(n-2)}r^2)^{-(n-2)/2},$$
which implies
$$ M^{2/(n-2)}r^2\ge \Bigl(\frac{c_n}{C_2}\Bigr)^{2/(n-2)}-n(n-2)M^{-2/(n-2)}
   \ge \frac12 \Bigl(\frac{c_n}{C_2}\Bigr)^{2/(n-2)}=:C_4 $$
provided $M\ge M_2\ge M_1$.
This estimate, the equality $U_M(r)=u_0(r)$ and \eqref{C123} imply
$$ -U_M'(r)=\frac rn M^{2/(n-2)}(U_M(r))^{n/(n-2)}
 \ge \frac{\sqrt{C_4}}n C_1^{n/(n-2)}M^{1/(n-2)} > C_3 \ge-u_0'(r)$$
provided $M\ge M_3\ge M_2$.
This shows \eqref{zR02} which, combined with \eqref{zR01} and $u_0(R_0)\ne U_M(R_0)$, yields \eqref{zR002}.

Now fix $M>\max(u_0(0),A)$ such that $z(u_0-U_M)=2$.
Then we also have
$z(w_\eps(\cdot,\delta)-U_M)=2$ and $w_\eps(0,\delta)<M$
for $\delta$ and $\eps$ small enough.
Since $u$ is unbounded, there exists $t_M$ such that $u(0,t_M)>M$,
hence $w_\eps(0,t_M)>M$ for $\eps$ small enough, which implies
$z(w_\eps(\cdot,t_M)-U_M)\le 1$.
Since $w_\eps(0,t)\to A$ as $t\to\infty$, we have $w_\eps(0,t_1)<M$
for some $t_1>t_M$, hence $z(w_\eps(\cdot,t_1)-U_M)=0$, $w_\eps\le U_M$,
which implies $w_\eps\to0$ as $t\to\infty$ due to \cite[Theorem~1.14]{GNW92}, a contradiction.    
\end{proof}

\section{Examples, remarks and open problems} \label{sec-ex}

In Section~\ref{sec-intro} we gave numerous applications of our results for the classical case $f(u)=u^p$.
Also, as noted there, the class of nonlinearities $f$ covered by our results is very large,
and for instance contains slightly superlinear functions like
$f(u)=u\log^2(1+u)$, as well as exponentially growing functions like $f(u)=e^u-u-1$.
In this section we consider two additional classes of nonlinearities,
and then comment on various issues related with notions of (sub-/super-)threshold solutions.

\begin{example} \label{exam-log} \rm
{\bf Logarithmic nonlinearities.}
Let $f(u)=u^p\log^a(2+u^2)$, $p>1$.
This model nonlinearity without scale invariance has been considered
in the study of blow-up rate (see \cite{HZ22}) and Liouville-type theorems
(see \cite{QS25p}). 
If $p<p_S$, $\Omega=\R^n$ and
$$a\in(-(p-1)c_\ell,(p_S-p)c_\ell],\quad\hbox{where }\ 
c_\ell:=\inf_{u>0}g(u),\quad g(u):=\log(2+u^2)\frac{2+u^2}{2u^2},$$
then \cite[Example~4.2]{QS25p} guarantees
$u^{p-1}\log^a(2+u^2)(\cdot,t)\le C(n,f)/t$
for any global solution $u$, hence global solutions are
bounded and tend to zero as $t\to\infty$.
Notice also that 
$$c_\ell=\inf_{u>0}g(u)=g(u_\ell),\quad\hbox{where }\ 
 g'(u_\ell)=0,\ 2\log(2+u_\ell^2)=u_\ell^2,\ c_\ell=\frac14(2+u_\ell^2).$$
(We have $u_\ell\approx1.832$, $c_\ell\approx1.339$.)

Next consider the case $p=p_S$. 
Conditions \eqref{f1}--\eqref{f4} are satisfied if, for example, $a\le0$, and
$f'(u)u\ge(1+\eta)f(u)$ for some $\eta>0$,
which is equivalent to $a\ge-(p-1-\eta)c_\ell$.
Hence Theorem~\ref{thm-nonrad-3} can be applied
for all $a\in(a_\ell,0]$, where $a_\ell:=-(p-1)c_\ell$. 
On the other hand, one can easily show that Theorem~\ref{thm-intro} can be applied
for $a\in(a_1,\infty)$, with some $a_1>a_\ell$, but not for all $a\in(a_\ell,0]$,
because the assumption $f''\ge0$ fails for $a$ close to $a_\ell$.
Indeed, noting that
$$ f''(u)=(p-1)u^{p-2}\log^{a-2}(2+u^2)\Bigl(\frac{2u^2}{2+u^2}\Bigr)^2h(u),$$
where 
$$ h(u)=h_a(u):=pg^2(u)+\frac{a}{p-1}g(u)\frac{4p+2+(2p-1)u^2}{2+u^2}+\frac{a}{p-1}(a-1),$$
we have $h_{a_\ell}(u_\ell)=0<h_{a_\ell}'(u_\ell)$.
Therefore there exists $u\in(0,u_\ell)$ such that $h_{a_\ell}(u)<0$,
hence $f''(u)<0$ for $a>a_\ell$ close to $a_\ell$.

Theorem~\ref{thm-intro} can also be used if $p>p_S$.
On the other hand, Theorem~\ref{thm-nonrad-3} requires
the existence of global unbounded solutions and this
does not seem to be known if $a\ne0$ (cf.~also Remark~\ref{rem-GUpS}).
\qed\end{example}
 
\begin{example} \label{exam-sum} \rm
{\bf Sums of powers.}
Let $\Omega=\R^n$ and $f(u)=u^{p_S}+\eta u^p$, where $p>1+\frac2n$ and $\eta\ge0$.
Assume that, for some $R>0$,
\begin{equation} \label{vpR}
\left. \begin{aligned}
&\hbox{$\varphi:\R^n\to[0,\infty)$ is radially nonincreasing with compact support,}\\
&\hbox{$\varphi=\varphi(|x|)\in C^1([0,R])$ and $\varphi(R)=0>\varphi'(R-)$.}
\end{aligned}\ \right\} 
\end{equation}
Let $u_\lambda$ be the solution with initial data $\lambda\varphi$ 
and $u^*=u_{\lambda^*}$ be the corresponding $\eps$-threshold.
We can describe the behavior of $u^*$ as follows, according to whether $\eta=0$ or $\eta>0$ and 
$p$ is sub- or supercritical.

\vskip 3pt

$\bullet$ Let $\eta=0$ and $n\in\{3,4\}$. Then Remark~\ref{rem-decay}(i), Theorem~\ref{thm-intro}(ii)--(iii)
and Corollary~\ref{cor-nonrad} guarantee 
that 
$$\hbox{$u^*$ is 
global unbounded 
and each subthreshold converges to zero as $t\to\infty$.}$$

\vskip 3pt

$\bullet$ Let $\eta>0$ and $p<p_S$. We show that, under an additional assumption on $\varphi$, 
\begin{equation} \label{vpR2}
\hbox{$u^*$ is global bounded and $u^*\le Ct^{-1/(p-1)}$, $t>0$.}
\end{equation}
Namely, recalling that the equation $-\Delta w=f(w)$ in $\R^n$ does not possess entire positive solutions
(see \cite{QS25p} and the references therein), and choosing $M=\varphi(0)$,
we can find $\tilde R>0$ such that the corresponding Cauchy-Dirichlet problem in 
$B_{\tilde R}=\{x\in\R^n:|x|<\tilde R\}$
possesses a radial positive steady state $w_0$ with $w_0(0)=M$.
We claim that property \eqref{vpR2} holds whenever
$$\hbox{\eqref{vpR} is true with
$R\ge\tilde R$ and $0\ge \varphi'(r)\ge w_0'(r)\ \hbox{ for }\ r\in[0,\tilde R)$}$$
(this assumption guarantees $z(\lambda\varphi-w_0)=1$ for $\lambda<1$).
To see this, first observe that, since $w_0$ is a strict subsolution for the problem in $\R^n$, 
the solution $w$ of the Cauchy problem  with initial data $w_0$ 
satisfies $w(\cdot,\delta)\ge(1+\eps)w_0$ in $B_{\tilde R}$ for some $\delta,\eps>0$.
The comparison of $w$ with the solution $\tilde w$
of the Cauchy-Dirichlet problem in $B_{\tilde R}$ 
with initial data $(1+\eps)w_0$
together with \cite[Theorem 17.10]{QS19} 
then show that $w$ blows up in finite time.
Now assume for contradiction that $u^*(0,t)>M$ for some $t>0$.
Then $u_\lambda(0,t)>M$ for some $\lambda<\lambda^*$
and a zero number argument shows $u_\lambda(\cdot,t)>w_0$,
hence $u_\lambda$ blows up in finite time,
which is a contradiction.
Consequently, $u^*$ is global and bounded; $u^*\le M$.
In addition, \cite[Theorem 3.1(ii)]{QS25p} implies $(u^*)^{p-1}\le C/t$.

\vskip 3pt

$\bullet$ Let $\eta>0$ and $p>p_S$. Then
 $$\hbox{$u^*$ 
does not converge to zero}$$
(we do not know whether $u^*$ is global or not, but this already shows that its behavior is different from the case $p<p_S$).
Indeed, choosing $M>0$ small enough, the Pohozaev identity implies that 
there exists an entire positive solution $w$ of $-\Delta w=f(w)$ in $\R^n$
satisfying $w(0)=M<u_0^*(0)$ and $z(w-\lambda\varphi)=1$ for $\lambda\ge\lambda^*$.
If $u^*(0,t_0)<M$ for some $t_0$, then we also have $u_\lambda(0,t_0)<M$ for
some $\lambda>\lambda^*$, hence $u_\lambda\le w$ for $t\ge t_0$,
which is a contradiction (since $u_\lambda$ blows up).
\qed\end{example}

\begin{remark} \label{rem-pJL}  \rm 
{\bf Bounded and unbounded subthresholds.}
Assume $\Omega=\R^n$, $p\ge p_{JL}$, set $m:=2/(p-1)$
and let $U_*(x):=c_p|x|^{-m}$ denote  the singular stationary solution 
(see \cite[Remark~3.6(i)]{QS19}). 
If $u_0\le U_*$ and 
\begin{equation} \label{ell}
  \lim_{|x|\to\infty}|x|^{\ell}(U_*-u_0)(x)=0, \quad 
\ell: =\frac12(n-2-\sqrt{(n-2-2m)^2-8(n-2-m)}) >0,
\end{equation}
then the corresponding solution $u$ is a global unbounded $\eps$-threshold
but it is not a threshold 
(see \cite{PY03}, and see also \cite{FKWY06,FKWY07,FKWY11} for more precise statements
on the asymptotic behavior of such solutions).
Condition \eqref{ell} is optimal in the following sense:
If $p>p_{JL}$, $u_0\le U_*$ and 
\begin{equation} \label{belowU*}
\liminf_{|x|\to\infty}|x|^\ell(U_*(x)-u_0(x))>0,
\end{equation}
then the solution $u$ is bounded, see \cite{FWY08}.

Theorem~\ref{thm-intro} implies a similar sufficient condition for the boundedness of $u$
in a more general situation: Assume that $\Omega=\R^n$, $f$ 
satisfies \eqref{f10} and $f(u)\leq u^r$ for $u$ small and some $r\ge 1$.
Let $w$ be a global unbounded solution with initial data satisfying
$w_0(x)\le C|x|^{-\alpha}$ for some $C, \alpha>0$ and $|x|$ large.
If $u_0<w_0$ and
\begin{equation} \label{beloww0} 
\liminf_{|x|\to\infty}|x|^{\alpha r}(w_0(x)-u_0(x))>0,
\end{equation}
then the solution $u$ with initial data $u_0$ is bounded.
Notice 
that if $f(u)=u^p$ with $p>p_{JL}$ and $w_0\le CU_*$,
then $\alpha=2/(p-1)$ and $r=p$, hence $\alpha r<\ell$, 
i.e. our condition \eqref{beloww0} is stronger than \eqref{belowU*}
if $w_0\le U_*$.
However, 
our initial data $w_0,u_0$ need not lie below $U_*$,
our results also imply blow-up in finite time
if $u_0>w_0$ and
$$\liminf_{|x|\to\infty}|x|^{\alpha r}(u_0(x)-w_0(x))>0,$$
and they can also be used for more general $f$.
\qed\end{remark}

\begin{remark} \label{rem-epsf} \rm
{\bf Non-coincidence of the three notions of thresholds.}

\smallskip

(i) Let $\Omega=\R^n$, $f(u)=u^p$ and $p>1+2/n$.
Then \cite[Theorem~1]{Q17} and \cite[Theorem~6.1]{PY03}
imply the existence of $L^*>0$ with the following properties:
Assume that
$u_0(x)\leq L^*|x|^{-2/(p-1)}$ for all $x$ and $u_0(x)=L^*|x|^{-2/(p-1)}$ for $|x|$ large.
Then the solution $u$ with initial data $u_0$ is global, 
but the solution with initial data $(1+\eps)u_0$ blows up in finite time.
In addition, $u$ is unbounded if $p\ge p_{JL}$.
If $u_0$ is also radial and radially nonincreasing, then
these facts and Theorem~\ref{thm-intro} guarantee that
$u$ is an $\eps$-threshold which is not an $\eps f$-threshold if $p<n/(n-2)$
(since $f(u_0)\in L^1$ and $u$ is not a threshold),
and $u$ is an $\eps f$-threshold which is not a threshold if $p\ge p_{JL}$. 

\smallskip

(ii) 
Let $f$ satisfy \eqref{f10}
and $u_0$ satisfy $T(\lambda u_0)=\infty$ for $\lambda>0$ small.
Note that $T(\lambda u_0)<\infty$ for $\lambda>0$ large by Lemma~\ref{lem-UB}, and set
$$ \lambda^*:=\sup\{\lambda>0:T(\lambda u_0)=\infty\}.$$
Then the solution $u^*$ with initial data $\lambda^*u_0$ is an $\eps$-threshold,
hence a threshold if $\Omega$ is bounded.
If $\Omega=\R^n$ and $u_0$ is radial and radially nonincreasing,
then $u^*$ need not be even an $\eps f$-threshold, see Remark~\ref{rem-epsf}(i). 
However, choosing a continuous, radial, radially nonincreasing function
$\varphi\not\equiv0$ satisfying $\varphi\in L^1(\R^n)$,
the arguments in \cite{Q17} imply that
$u^{**}:=u^*+\mu\varphi$ is a threshold (hence $\eps f$-threshold) for suitable $\mu\ge0$.
\qed\end{remark}

\begin{remark} \label{rem-GUpS} \rm 
{\bf Unbounded global solutions in bounded domains.}
If $\Omega$ is bounded and $u$ is global unbounded, then Theorem~\ref{thm-introA} 
implies that $u$ is a threshold.
However, such solutions seem to be known only if $f(u)=u^{p_S}$
(and the proof of their existence is not easy).
In fact, the boundedness of all global solutions is known
for a large class of $f$ with subcritical growth,
and in the supercritical case,  
such boundedness follows from \cite{CDZ,BS,S17} if $f(u)=u^p$ and $\Omega$ is convex
or \cite{CFG08} if $\lim_{u\to\infty}u^{-p}f(u)=1$, $p\in(p_S,p_{JL})$,
$\Omega$ is a ball and $u$ is radial. 
It would be interesting to have examples of unbounded global solutions
for $f$ different from but ``close'' to $f(u)=u^{p_S}$. 
\qed\end{remark}

\begin{remark} \label{rem-weak} \rm 
{\bf Nonexistence of global weak continuation for superthreshold solutions.}
Assume \eqref{f10}, $\Omega$ bounded, let $u, v$ be the solutions of \eqref{MPf}, with respective initial data 
$u_0\le v_0$, $u_0\not\equiv v_0$, and assume that $u$ is a threshold
(hence $v$ blows up in finite time).
Then, as consequence of the arguments in the proof of Theorem~\ref{thm-subsol2}.
it follows that $v$ cannot be continued as a global weak solution
(cf.~\cite{NST} or \cite[Definition~48.8]{QS19} for precise definition).

To see this, assume $v_0=\lambda u_0$ for some $\lambda>1$ without loss of generality,
and suppose for contradiction that such a continuation exists,
still denoted $v$. By \cite[Proposition~3]{NST}, $v$ satisfies $L^1_\delta$ bounds, i.e.~$\sup_{t>0}\|v(t)\|_{L^1_\delta}<\infty$.
Let $w$ be the minimal solution starting from $v_0$.
By definition (cf.~\cite[Remark~15.4(vii)]{QS19}), $w$ is the increasing limit of global classical solutions $w_j$ of truncated problems,
replacing $f$ by $f_j$ in \eqref{MPf}, where $f_j$ is any sequence of $C^1$ functions such that $f_j\nearrow f$ 
locally uniformly on $[0,\infty)$
and the corresponding solutions are global.
In particular we may use the convex affine truncations $f_j(s)=f(j)+(s-j)f'(j)$ for $s>j$.
Then we have $w\le v$ by the minimality of $w$
(more precisely, this follows from \cite[Remark~15.4(vii)]{QS19}, noting that $v$ is an integral solution by \cite[Corollary~48.10]{QS19}).
Consequently, $w$ also satisfies $L^1_\delta$ bounds.
Fix $\mu \in (1,\lambda)$, denote by $z$ the solution of \eqref{MPf}, with initial data $\mu u_0$,
by $T<\infty$ its blowup time and by $z_j$ the solution of the truncated problems.
By the proof of Theorem~\ref{thm-subsol2} (applied with $f_j$), we have 
$$\frac{f_j^k(z_j)}{z_j^{k-1}}\le C_k(w_j+1) \le C_k(1+w)\quad\hbox{in $\Omega\times(0,T)$}.$$
for all $k,j\ge 1$, where the constants $C_k$ are independent of $j$.
Passing to the limit $j\to\infty$, we get $\frac{f^k(z)}{z^{k-1}}\le C_k(1+w)$, hence 
$\frac{f^k(z)}{z^{k-1}}$ satisfies $L^1_\delta$ bounds in $\Omega\times(0,T)$ for any $k\ge 1$.
We then deduce from Lemma~\ref{lem-smoothing} that $z$ is bounded in $\Omega\times(0,T)$:
a contradiction.
\qed\end{remark}

\vskip3mm

\noindent{\bf Declarations.} The authors have no competing interests to declare regarding the work in this paper.

\vskip3mm

\noindent{\bf Acknowledgement.}
The first author was supported in part by the Slovak Research and Development Agency
under the contract No.~APVV-23-0039 and by VEGA grant 1/0245/24.

\end{document}